\newtheorem{theorem}{Theorem}[section]
\newtheorem{lemma}[theorem]{Lemma}
\newtheorem{corollary}[theorem]{Corollary}
\newtheorem{proposition}[theorem]{Proposition}
\theoremstyle{definition}
\newtheorem{definition}[theorem]{Definition}
\newtheorem{question}[theorem]{Question}
\newtheorem{fact}[theorem]{Fact}
\newtheorem*{remark}{Remark}
\newtheorem*{claim}{Claim}
\newtheorem*{subclaim}{Subclaim}
\newcommand{\lhdneq}{\mathrel{\lhd_{\hspace*{-0.235cm}{}_{\not -}}}}
\numberwithin{equation}{section}
\title{The limitless First Incompleteness Theorem}
\author{Yong Cheng\\
School of Philosophy, Wuhan University, China}
\address{School of Philosophy, Wuhan University, Wuhan 430072 Hubei, Peoples Republic of China}
\email{world-cyr@hotmail.com}
\thanks{We thank all referees for helpful suggestions and comments for improvements. We thank Albert Visser for communications of his work from which we know the current status of this field and some results in this paper are inspired. We also thank Harvey Friedman, Fedor Pakhomov and Emil Je\v{r}\'{a}bek for communications of their work.}
\subjclass[2010]{03F40, 03F30, 03D35}
\keywords{G\"{o}del's incompleteness theorems, Essentially undecidable theories, Interpretation, Recursively inseparable theories, Minimality}
\begin{document}
\begin{abstract}
This work is motivated by the problem of finding the limit of the applicability of the first incompleteness theorem ($\sf G1$). A natural question is: can we find a minimal theory  for which $\sf G1$ holds? We examine the Turing degree structure of recursively enumerable (RE) theories for which $\sf G1$ holds and the interpretation degree structure of RE theories weaker than the theory $\mathbf{R}$ with respect to interpretation for which $\sf G1$ holds. We answer all questions that we posed in \cite{Cheng20}, and prove more results about them. 
It is known that there are no minimal essentially undecidable theories with respect to interpretation. We generalize this result and give some general characterizations which tell us under what conditions there are no minimal RE theories having some property with respect to interpretation. 
\end{abstract}

\thispagestyle{empty}
\newpage

\maketitle

\section{Introduction}

This work is motivated by the problem of finding the limit of the applicability of the first incompleteness theorem ($\sf G1$). 
In this paper, we work with  first-order theories with finite signature, and we equate a theory $T$ with the set of theorems provable in $T$. Let $T$ be a consistent recursively enumerable (RE) theory. To generalize $\sf G1$ to weaker theories than $\mathbf{PA}$ with respect to interpretation,  we introduce the notion ``$\sf G1$ holds for a theory $T$".

\begin{definition}[\cite{Cheng20}]\label{}
We say that \emph{$\sf G1$ holds for an  RE theory $T$}  if  any  consistent RE theory that interprets $T$  is incomplete.
\end{definition}

We say a theory $T$ is \emph{essentially incomplete} if  any consistent RE extension of $T$ in the same language is incomplete. 
The first incompleteness theorem tells us that $\mathbf{PA}$ is essentially incomplete. Motivated by finding the limit of the applicability of the first incompleteness theorem, we are interested in to what extent $\sf G1$ holds for theories weaker than $\mathbf{PA}$. The notion of essential incompleteness only refers to consistent RE extensions in the same language. To give a general formulation of $\sf G1$, in \cite{Cheng20} we use  the notion of interpretation such that we can compare different theories from distinctive languages. We could define the notion ``$\sf G1$ holds for a theory $T$" in different ways. For example, we could define this notion in a way that it is equivalent with $T$ is recursively inseparable or $T$ is effectively inseparable (for the definitions of these notions, we refer to Definition \ref{Inseparability}). This is a follow up paper of \cite{Cheng20}. In this paper, we just follow the definition of ``$\sf G1$ holds for a theory" in \cite{Cheng20}.

The theory of completeness/incompleteness  is closely related to the theory of decidability/undecidability (see \cite{undecidable}).  We say a theory $T$ is \emph{essentially undecidable} ({\sf EU}) if any consistent RE extension of $T$ in the same language is undecidable.

\begin{proposition}[\cite{undecidable}, see also \cite{Cheng20}]
For any consistent RE theory $T$, $\sf G1$ holds for  $T$ iff $T$ is essentially incomplete iff $T$ is essentially undecidable.
\end{proposition}
It is well known that $\sf G1$ holds for Robinson Arithmetic $\mathbf{Q}$ and the theory $\mathbf{R}$ (see \cite{undecidable}).

In this paper, the notion of interpretation we use closely follows \cite{Visser16, Visser24}. 
An \emph{interpretation} of a theory $U$ in a theory $V$ is based on a translation $\tau$ of the
$U$-language into the $V$-language. We will use the most inclusive notion to wit piecewise, multidimensional (with dimensions varying over pieces), relative, non-identity-preserving interpretability with parameters (\cite{Visser16}). A translation for the relational case commutes with the propositional connectives, and in some broad sense, it also commutes with the quantifiers but here there are a number
of extra features.  The translations could be more-dimensional in which a variable could be translated to an
appropriate sequence of variables; we may have domain relativisation allowing the range of the translated quantifiers
to be some domain definable in the $V$-language; the new domain may be built up from pieces of possibly different
dimensions; we also allow parameters in an interpretation, and the translation may specify a parameter-domain; moreover, identity need not be translated to identity but can be translated
to a congruence relation (\cite{Visser24}). 
We refer the reader for these definitions to \cite{Visser16, Visser24}.

We say a theory $T$ is \emph{interpretable} in a theory $S$ if there exists an
interpretation  of $T$ in $S$. Given theories $S$ and $T$, let $S\lhd T$ denote that $S$ is interpretable in $T$ (or $T$ interprets $S$); let $S\lhdneq T$ denote that  $T$ interprets $S$ but $S$ does not interpret  $T$; we say $S$ and $T$ are
\emph{mutually interpretable}, denoted by $S \bowtie T$, if $S\lhd T$ and $T\lhd S$. 
In fact, $\sf G1$ holds for many theories weaker than $\mathbf{PA}$ w.r.t. interpretation.

We have the following picture (for definitions of these weak theories, we refer to \cite{Cheng20}):\footnote{This is only a selective picture to give readers a sense that there are many theories weaker than $\mathbf{PA}$ for which ${\sf G1}$ holds. For the full picture of the hierarchy of weak arithmetic theories, we refer to \cite{Pudlak 93}.}
\begin{itemize}
\item $\mathbf{Q}\lhdneq  I\Sigma_1\lhdneq I\Sigma_2\lhdneq\cdots\lhdneq I\Sigma_n\lhdneq\cdots\lhdneq \mathbf{PA}$, and  ${\sf G1}$ holds for them.
    \item The theories $\mathbf{Q},  I\Sigma_0, I\Sigma_0+\Omega_{1}, \cdots, I\Sigma_0+\Omega_{n}, \cdots, B\Sigma_1, B\Sigma_1+\Omega_{1}, \cdots$, $B\Sigma_1+\Omega_{n}, \cdots$ are all mutually interpretable, and  ${\sf G1}$ holds for them.
  \item Theories $\mathbf{PA}^{-}, \mathbf{Q}^{+}, \mathbf{Q}^{-},  \mathbf{TC}, \mathbf{AS},\mathbf{S^1_2}$ and $\mathbf{Q}$  are all mutually interpretable, and ${\sf G1}$ holds for them.
\item  $\mathbf{R}\lhdneq\mathbf{Q}\lhdneq \mathbf{EA}\lhdneq \mathbf{PRA}\lhdneq\mathbf{PA}$, and ${\sf G1}$ holds for them.
\end{itemize}

A natural question is: can we find a minimal theory for which $\sf G1$  holds?  The answer of this question depends on our definition of minimality. Recall that we equate a
theory  with the set of theorems provable in it.
If we define minimality as having the minimal number of axioms, then any finitely axiomatizable essentially undecidable theory (e.g., Robinson Arithmetic $\mathbf{Q}$) is a minimal RE theory for which $\sf G1$  holds.

When we talk about minimality, we should specify the degree structure involved. Given two theories $U$ and $V$,   let \emph{$U\leq_{\sf T} V$}  denote that $U$  is Turing reducible to  $V$, and let \emph{$U<_{\sf T} V$}  denote that $U\leq_{\sf T} V$ but $V\nleq_{\sf T} U$. In \cite{Cheng20}, we examine two degree structures  that are respectively induced from Turing reducibility and interpretation: $\langle {\sf D_T}, \leq_{\sf T}\rangle$ and $\langle {\sf D_I}, \lhd\rangle$.
We first introduce two key notions. 

\begin{definition}[\cite{Cheng20}]~\label{}
\begin{enumerate}[(1)]
  \item Let ${\sf D_T}=\{S: S$ is RE, $S<_{\sf T} \mathbf{R}$ and $\sf G1$  holds for $S$\}.
  \item Let ${\sf D_I}=\{S: S$ is RE, $S\lhdneq \mathbf{R}$ and $\sf G1$  holds for $S$\}.
\end{enumerate}
\end{definition}

In \cite{Cheng20}, we show that there are no  minimal RE theories w.r.t. Turing reducibility for which $\sf G1$  holds, and prove some results about the structure  $\langle {\sf D_I}, \lhd\rangle$.
It is a question in \cite{Cheng20} that whether $\langle {\sf D_I}, \lhd\rangle$ has a minimal element?   This question is answered negatively in \cite{PV} (c.f., Theorem \ref{PV thm}).
The following questions about $\langle {\sf D_I}, \lhd\rangle$  are unanswered in \cite{Cheng20}:

\begin{question}~\label{key qn}
\begin{enumerate}[(1)]
  \item Can we show that for any Turing degree $\mathbf{0}<\mathbf{d}\leq\mathbf{0}^{\prime}$, there exists a theory $U\in {\sf D_I}$ with Turing degree $\mathbf{d}$?
  \item   Are elements of $\langle {\sf D_I}, \lhd\rangle$ comparable?
\end{enumerate}
\end{question}

In this paper, we examine above questions and prove more facts about $\langle {\sf D_T}, \leq_{\sf T}\rangle$ and $\langle {\sf D_I}, \lhd\rangle$. The structure of this paper is as follows. In Section 2, we list definitions and facts we use in this paper. In Section 3, we examine and prove some facts about the structure $\langle {\sf D_T}, \leq_{\sf T}\rangle$. In Section 4, we answer all questions about the structure $\langle {\sf D_I}, \lhd\rangle$ in \cite{Cheng20} and prove more results about $\langle {\sf D_I}, \lhd\rangle$. 
It is proved in \cite{PV} that  there are no minimal essentially undecidable theories with respect to interpretation. In Section 5, we generalize this result and give some general characterizations which tell us under what conditions there are no minimal RE theories having some property with respect to interpretation. In Section 6, we give some concluding remarks. In the Appendix, we give a brief overview of interpretation degree structures of three classes of theories  in the literature: general RE theories, RE theories extending $\mathbf{PA}$ and finitely axiomatizable theories.

\section{Preliminaries}

In this paper, we always assume the \emph{arithmetization} of the base theory. Given a sentence $\phi$, let $\ulcorner \phi\urcorner$ denote the G\"{o}del number of $\phi$. Under arithmetization, we equate a set  of sentences  with the set of G\"{o}del numbers of sentences.

Robinson Arithmetic $\mathbf{Q}$ and the theory $\mathbf{R}$ are both introduced in \cite{undecidable} by Tarski,  Mostowski and Robinson as  base axiomatic theories for investigating incompleteness and undecidability.

\begin{definition}[Robinson Arithmetic $\mathbf{Q}$, \cite{undecidable}]~\label{def of Q}
Robinson Arithmetic $\mathbf{Q}$  is  defined in   the language $\{\mathbf{0}, \mathbf{S}, +, \times\}$ with the following axioms:
\begin{description}
  \item[$\mathbf{Q}_1$] $\forall x \forall y\, (\mathbf{S}x=\mathbf{S} y\rightarrow x=y)$;
  \item[$\mathbf{Q}_2$] $\forall x\, (\mathbf{S} x\neq \mathbf{0})$;
  \item[$\mathbf{Q}_3$] $\forall x\, (x\neq \mathbf{0}\rightarrow \exists y\, (x=\mathbf{S} y))$;
  \item[$\mathbf{Q}_4$]  $\forall x\forall y\, (x+ \mathbf{0}=x)$;
  \item[$\mathbf{Q}_5$] $\forall x\forall y\, (x+ \mathbf{S} y=\mathbf{S} (x+y))$;
  \item[$\mathbf{Q}_6$] $\forall x\, (x\times \mathbf{0}=\mathbf{0})$;
  \item[$\mathbf{Q}_7$] $\forall x\forall y\, (x\times \mathbf{S} y=x\times y +x)$.
\end{description}
\end{definition}

Now, we introduce the theory $\mathbf{R}$, which contains all key properties of arithmetic for the proof of  $\sf G1$. The theory $\mathbf{R}$ has the Turing degree $\mathbf{0}^{\prime}$.

\begin{definition}[The theory $\mathbf{R}$, \cite{undecidable}]~
Let $\mathbf{R}$ be the theory consisting of $\mathbf{Ax1}$-$\mathbf{Ax5}$ in   the language $\{\mathbf{0}, \mathbf{S}, +, \times, \leq\}$ where  $\overline{n}=\mathbf{S}^n \mathbf{0}$ for $n \in \omega$.
\begin{description}
  \item[Ax1] $\overline{m}+\overline{n}=\overline{m+n}$;
  \item[Ax2] $\overline{m}\times\overline{n}=\overline{m\times n}$;
  \item[Ax3] $\overline{m}\neq\overline{n}$ if $m\neq n$;
  \item[Ax4] $\forall x(x\leq \overline{n}\rightarrow x=\overline{0}\vee \cdots \vee x=\overline{n})$;
  \item[Ax5] $\forall x(x\leq \overline{n}\vee \overline{n}\leq x)$.
\end{description}
\end{definition}

We introduce some basic notions in recursion theory.
\begin{definition}[Basic recursion theory, \cite{Rogers87}]\label{}~
\begin{enumerate}[(1)]
  \item Let $\langle \phi_e: e\in\omega\rangle$ be the list of all Turing programs, and $\langle {\sf W}_e: e\in\omega\rangle$ be the list of all RE sets, where
${\sf W}_e=\{x: \exists y \, {\sf T_1}(e,x, y)\}$ and ${\sf T_1}(z,x, y)$ is the Kleene predicate (cf.\cite{Kleene}).\footnote{Note that $x\in {\sf W}_e$ if and only if for some $y$,  the $e$-th Turing program with input $x$ yields an output in precisely $y$ steps.}
  \item Given $A, B\subseteq\omega$, we say that $A$ is \emph{Turing incomparable} with $B$ if $A\nleq_{\sf T} B$ and $B\nleq_{\sf T} A$.
  \item Let $\mathcal{C}$ be some class of theories. We say $S$ is a \emph{minimal}  RE theory in $\mathcal{C}$ w.r.t.  Turing reducibility if there is no RE theory $V$ in $\mathcal{C}$ such  that $V<_{\sf T} S$.
\end{enumerate}
\end{definition}

Now, we introduce some notions about incompleteness and undecidability in the literature. 
\begin{definition}[Incompleteness and undecidability, \cite{undecidable}]~\label{}
\begin{enumerate}[(1)]
\item We say a theory $T$ is \emph{incomplete} if there exists a sentence  in the language of $T$ which is neither provable nor refutable in $T$; otherwise, $T$ is \emph{complete}.
    \item A theory $T$
is \emph{locally finitely satisfiable} if every finitely axiomatizable sub-theory of $T$ has
a finite model.
    \item We say a theory $T$ is \emph{undecidable} if $T$ is not recursive; otherwise, $T$ is \emph{decidable}.
 \item We say a theory $T$ is \emph{hereditarily undecidable}  if any sub-theory $S$ of $T$ with the same language as $T$ is undecidable.
\end{enumerate}
\end{definition}


Now, we introduce some notions about interpretation and explain the meaning of minimality  with respect to interpretation. 

\begin{definition}[Interpretations]~
\begin{itemize}
\item Given theories $S$ and $T$, we say that  $S$ \emph{tolerates}  $T$ if $T$ is interpretable in some consistent extension of $S$ with the same language of $S$ (or  equivalently, for some interpretation $\tau$, the theory $S + T^{\tau}$ is consistent).\footnote{Note that $T^{\tau}$ should encompass the translations of the identity axioms and of the functionality axioms.}
    \item   We say that a theory $S$ is \emph{weaker} than a theory $T$ w.r.t.~ interpretation if $S\lhdneq T$.
 \item Given theories $S$ and $T$, we say that $S$ is \emph{incomparable} with $T$ w.r.t.~ interpretation if $S$ is not interpretable in $T$ and $T$ is not interpretable in $S$.
\item  Let $\mathcal{C}$ be some class of theories. We say $S$ is a \emph{minimal} RE theory in $\mathcal{C}$ w.r.t.~ interpretation if there is no RE theory $T$ in $\mathcal{C}$ such  that $T\lhdneq  S$.
\end{itemize}
\end{definition}

The notion of interpretation provides us with a method to compare different theories in different languages. If $T$ is interpretable in $S$, then all sentences provable (refutable) in $T$ are mapped, by the interpretation function, to sentences provable (refutable) in $S$.
The equivalence
classes of theories, under the equivalence relation $\bowtie$, are
called the interpretation degrees.

The following two theorems are about important properties of the theory $\mathbf{R}$ which we will use in this paper. 

\begin{theorem}[Visser, Theorem 6, \cite{Visser 14}]\label{visser thm on R}
For any RE theory $T$ with finite signature,  $T$ is interpretable in the theory $\mathbf{R}$ iff $T$ is locally finitely satisfiable.\footnote{In fact, if $T$ is locally finitely satisfiable, then $T$ is interpretable in $\mathbf{R}$ via a one-piece one-dimensional parameter-free interpretation.}
\end{theorem}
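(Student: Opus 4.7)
The plan is to prove the two directions separately, with the forward direction by a straightforward compactness argument and the backward direction by an explicit interpretation construction.

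For the forward direction, suppose $T \unlhd \mathbf{R}$ via an interpretation $\tau$, and let $T_0 \subseteq T$ be finite. Since $\mathbf{R}$ proves $\phi^{\tau}$ for each $\phi \in T_0$ as well as the totality conditions for the function symbols, and proofs use only finitely many axioms, some finite $\mathbf{R}_0 \subseteq \mathbf{R}$ suffices for all these derivations. But $\mathbf{R}_0$ mentions only numerals up to some $\overline{N}$, so it has a finite model on $\{0,1,\ldots,N'\}$ for a suitably large $N'$, obtained by interpreting $\mathbf{S}, +, \times, \leq$ in the natural way with capping so that $\mathbf{Ax1}$--$\mathbf{Ax5}$ hold for the mentioned numerals. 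Pushing this finite model through $\tau$ yields a finite model of $T_0$.

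For the backward direction, suppose $T$ is locally finitely satisfiable. Since $T$ is r.e., I would first effectively enumerate its axioms as $\phi_0, \phi_1, \ldots$ and, by local finite satisfiability, effectively search for a finite model $M_n$ of $\{\phi_0, \ldots, \phi_{n-1}\}$ for each $n$; after padding, I may assume each $M_n$ has domain $\{0, 1, \ldots, k_n - 1\}$ with $k_n$ recursive and nondecreasing. I would then construct a one-dimensional, parameter-free, one-piece interpretation $I$ of $T$ in $\mathbf{R}$ whose elements encode pairs (element of some $M_n$, level $n$), and whose interpreted relations and functions agree with those of $M_n$ at level $n$. The formulas $\delta_I$, $R_I$, and $F_I$ would be written in $L(\mathbf{R})$ using the graphs of the recursive functions encoding the sequence $(M_n)$, chosen so that for every axiom $\phi_m$ of $T$ and every numerical level $\overline{n}$ with $n \geq m$, the translation $\phi_m^I$ restricted to level $\overline{n}$ reduces to a concrete finite check on numerals $\overline{0}, \ldots, \overline{k_n}$ that $\mathbf{R}$ can discharge using $\mathbf{Ax1}$--$\mathbf{Ax5}$.

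The main obstacle is the extreme weakness of $\mathbf{R}$: no induction, no $\Sigma_1$-completeness, no totality of functions, and only bounded atomic facts about numerals. Consequently, the verification that $\mathbf{R} \vdash \phi_m^I$ cannot use any uniform internal reasoning but must, for each specific $\phi_m$, factor through finitely many concrete atomic numeral identities provable from $\mathbf{Ax1}$--$\mathbf{Ax3}$ together with the bounded linearity/discreteness axioms $\mathbf{Ax4}$--$\mathbf{Ax5}$. Designing the coding of the sequence $(M_n)$ so that each $\phi_m^I$ admits exactly such a finite decomposition—while simultaneously keeping the interpretation one-dimensional and parameter-free—is the technical heart of the construction and the step I expect to be hardest to make precise.
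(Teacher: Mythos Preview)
The paper does not give a proof of this theorem: it is quoted as a result of Visser from \cite{Visser 14} and used only as a black box (in the proof of Theorem~\ref{main key thm}(5), to conclude $T_{\mathbf d}\unlhd\mathbf R$ from local finite satisfiability). So there is no paper-internal argument to compare your proposal against.

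On the merits of your sketch: the forward direction is essentially correct. Every finite subtheory $\mathbf R_0$ of $\mathbf R$ has a finite model (take a sufficiently long initial segment of $\mathbb N$ with capped $+$ and $\times$), and once you include in $\mathbf R_0$ the finitely many axioms needed to prove the translations $\phi^{\tau}$ for $\phi\in T_0$, the totality conditions, the equality axioms, and the nonemptiness of $\delta_I$, the interpreted structure inside that finite model (quotiented by $=_I$ if equality is not identity) is a finite model of $T_0$.

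For the backward direction you have the right global picture---enumerate finite models $M_n$ of the initial segments of $T$ and code the whole sequence inside $\mathbf R$---but, as you yourself flag, the step you leave open is exactly the nontrivial content of Visser's theorem. The difficulty is real: $\mathbf R$ does not prove $\Sigma_1$-completeness, has no induction, and cannot uniformly talk about ``the $n$-th model'' for variable $n$, so one cannot simply write down the graphs of the recursive coding functions and expect $\mathbf R$ to verify each $\phi_m^I$. Visser's argument does not proceed by a direct level-by-level encoding of the $M_n$; it passes through an intermediate analysis of what $\mathbf R$ can interpret and uses a carefully engineered translation so that each instance reduces to finitely many numeral facts in a way that is not obvious from your description. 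Your proposal is therefore a correct outline of the \emph{shape} of the proof, but not yet a proof: the missing step is precisely the one that requires the ideas specific to \cite{Visser 14}.
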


\begin{theorem}[Cobham, \cite{Visser16}]~\label{Cobham thm}
Any consistent RE theory that tolerates the theory $\mathbf{R}$ is undecidable.
\end{theorem}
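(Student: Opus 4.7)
The plan is to argue by contradiction: suppose $S$ is a consistent r.e.\ theory that weakly interprets $\mathbf{R}$ via some translation $\tau$, so that $S+\mathbf{R}^{\tau}$ is consistent, and assume for contradiction that $S$ is decidable. Here $\mathbf{R}^{\tau}$ denotes the (r.e.) set of $\tau$-translations of the axioms of $\mathbf{R}$.

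The first step promotes the essential undecidability of $\mathbf{R}$ (classical, \cite{undecidable}) to essential undecidability of $S+\mathbf{R}^{\tau}$: since $\tau$ witnesses that $\mathbf{R}$ is interpretable in the consistent, finitely signed theory $S+\mathbf{R}^{\tau}$, Theorem~\ref{interpretable theorem} applies, so every consistent r.e.\ extension of $S+\mathbf{R}^{\tau}$ in $L(S)$ is undecidable.

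The second step, the main technical content, uses the assumed decidability of $S$ to manufacture a \emph{decidable} consistent extension of $S+\mathbf{R}^{\tau}$ in $L(S)$, directly contradicting the previous paragraph. The construction is an effective Lindenbaum-style completion: enumerate the $L(S)$-sentences as $\phi_{0},\phi_{1},\ldots$ and the axioms of $\mathbf{R}^{\tau}$ as $\alpha_{0},\alpha_{1},\ldots$, and build stage by stage a chain of finite extensions $T_{n}$ of $S$ that commit to $\alpha_{0},\ldots,\alpha_{n-1}$ and decide each $\phi_{0},\ldots,\phi_{n-1}$ by some $\epsilon_{i}\in\{\phi_{i},\neg\phi_{i}\}$, while maintaining the invariant that $T_{n}+\mathbf{R}^{\tau}$ is consistent. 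Because $S$ is decidable, every consistency check of a finite extension of $S$ by committed sentences is decidable, so each stage proceeds effectively; the union $S^{\ast}=\bigcup_{n}T_{n}$ is then a complete consistent extension of $S+\mathbf{R}^{\tau}$ in $L(S)$, and membership in $S^{\ast}$ is decided by simulating the construction, so $S^{\ast}$ is decidable. This contradicts the essential undecidability of $S+\mathbf{R}^{\tau}$ from the first step.

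The principal obstacle is precisely this Lindenbaum step: the invariant "$T_{n}+\mathbf{R}^{\tau}$ consistent" is a $\Pi^{0}_{1}$ condition, not immediately testable in finite time, so at each stage the rule for choosing $\epsilon_{n}\in\{\phi_{n},\neg\phi_{n}\}$ must be set up so as to be both effective and guaranteed to preserve the invariant; the natural way to arrange this combines decidable finite consistency tests in $S$ with a dovetailed search for inconsistency witnesses coming from finite fragments of $\mathbf{R}^{\tau}$, using the fact that the invariant at stage $n$ already guarantees that at least one of the two choices is safe. Everything else — the transfer of essential undecidability through $\tau$, and the verification that the simulated construction yields a decision procedure for $S^{\ast}$ — is routine.
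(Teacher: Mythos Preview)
Your argument has a genuine gap at exactly the point you flag as the principal obstacle. The dovetailed search you propose terminates only when one of the two candidate extensions $T_n+\phi_n$, $T_n+\neg\phi_n$ is \emph{inconsistent} with some finite fragment of $\mathbf{R}^{\tau}$; when both $T_n+\phi_n+\mathbf{R}^{\tau}$ and $T_n+\neg\phi_n+\mathbf{R}^{\tau}$ are consistent, neither search ever halts and the construction stalls at stage $n$. Knowing that at least one choice is safe does not help you effectively pick one, and any ``default'' rule after a bounded search risks violating the invariant at a later stage.

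More fundamentally, your step~2 cannot be repaired without bringing in special features of $\mathbf{R}$: the construction you describe uses only that $S$ is decidable and $S+U$ is consistent for some r.e.\ set $U$, and would therefore produce a decidable consistent extension of \emph{any} such $S+U$. But this is false. A counterexample is furnished by the very machinery of this paper: take $S={\sf J}$ (decidable by Theorem~\ref{J thm}) and $U=\{\Phi_n:n\in B\}\cup\{\neg\Phi_n:n\in C\}$ for a recursively inseparable pair $(B,C)$; then $S+U=T_{(B,C)}$ is consistent and essentially undecidable (Theorem~\ref{Shoenfield second}), so it has no decidable consistent extension whatsoever. Thus your scheme, which invokes only the essential undecidability of $\mathbf{R}$ via Theorem~\ref{interpretable theorem}, is too coarse. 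Cobham's theorem genuinely exploits finer properties of $\mathbf{R}$ --- its $\Sigma_1$-completeness and the numeral-by-numeral structure of its axiom schemes --- that your outline does not touch. (The paper itself does not supply a proof, citing \cite{Visser16} for the argument.)
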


The following theorem provides us with a method for proving essential undecidability of a theory via interpretation.

\begin{theorem}[Theorem 7, Corollary 2, \cite{undecidable}]\label{interpretable theorem}~
Let $T_1$ and $T_2$ be two consistent theories with finite signature such that $T_1$ is interpretable in $T_2$. If $T_1$ is essentially undecidable, then $T_2$ is essentially undecidable.
\end{theorem}

Now, we introduce the notion of recursively inseparable theories and effectively inseparable  theories. 

\begin{definition}[Inseparability, \cite{Smullyan}]~\label{Inseparability}
Let $T$ be a consistent RE theory, and $(A, B)$ be a pair  of disjoint RE sets.
\begin{enumerate}[(1)]
  \item The pair $(T_{\sf P}, T_{\sf R})$ is called the \emph{nuclei} of the theory $T$, where  $T_{\sf P}$ is the set of G\"{o}del numbers of sentences provable in $T$, and $T_{\sf R}$ is the set of G\"{o}del numbers of sentences refutable in $T$ (i.e., $T_{\sf P}=\{\ulcorner\phi\urcorner: T\vdash\phi\}$ and $T_{\sf R}=\{\ulcorner\phi\urcorner: T\vdash\neg\phi\}$).
      \item We say $(A, B)$ is a \emph{recursively inseparable} ($\sf RI$) pair  if there is no recursive set $X\subseteq\omega$ such that $A\subseteq X$ and $X\cap B=\emptyset$.
      \item We say $T$ is recursively inseparable $(\sf RI)$ if $(T_{\sf P}, T_{\sf R})$ is $\sf RI$.
    \item  We say $(A, B)$ is \emph{effectively inseparable} $(\sf EI)$ if there is a recursive function $f(x,y)$ such that for any $i$ and $j$, if $A\subseteq W_i$ and $B\subseteq W_j$ with $W_i\cap W_j=\emptyset$, then $f(i,j)\notin W_i\cup W_j$.
  \item We say $T$ is effectively inseparable $(\sf EI)$ if $(T_{\sf P}, T_{\sf R})$ is $\sf EI$.
\end{enumerate}
\end{definition}

We list some properties of recursively inseparable theories and effectively inseparable  theories we will use. 

\begin{fact}~\label{EI fact}
Let $S$  and $T$ be consistent RE theories.
\begin{enumerate}[(1)]
  \item If $S\lhd T$ and $S$ is $\sf RI \, ({\sf EI})$, then $T$ is $\sf RI \,({\sf EI})$ (\cite{Smullyan}).
      \item The theory $\mathbf{R}$ is ${\sf EI}$. As a corollary, if $\mathbf{R}\lhd T$, then $T$ is ${\sf EI}$ (\cite{Smullyan}).
  \item If $T$ is ${\sf EI}$, then $T$  has Turing degree $\mathbf{0}^{\prime}$ (\cite{Rogers87}).
      \item If $T$ is ${\sf EI}$, then $T$ is ${\sf RI}$, and if $T$ is ${\sf RI}$, then $T$ is ${\sf EU}$ (\cite{Smullyan}). 
\end{enumerate}
\end{fact}

\begin{definition}[\cite{PV}, \cite{Cheng20}]\label{sum of theory}
We  introduce two natural operators on RE theories. The infimum $U \oplus V$ is defined as follows: $U \oplus V$ is a theory in the
disjoint sum of the signatures of $U$ and $V$ plus a fresh $0$-ary
predicate symbol $P$. The theory is axiomatised by all $P \rightarrow\varphi$,
where $\varphi$ is a  sentence in $U$ plus $\neg P\rightarrow\psi$, where $\psi$ is a  sentence in $V$.
\end{definition}
We can show that $U \oplus V$  is the infimum
of $U$ and $V$ in the interpretability ordering $\lhd$. This result works for all choices of our
notion of interpretation (see \cite{Visser24}).

\section{The structure $\langle {\sf D_T}, \leq_{\sf T}\rangle$}

In this section, we examine the structure $\langle {\sf D_T}, \leq_{\sf T}\rangle$.
Hanf shows that there is a finitely axiomatizable theory in each recursively enumerable tt-degree (see \cite{Rogers87}).
Feferman shows in \cite{Feferman57} that if $A$ is any recursively enumerable set, then there is a recursively axiomatizable theory $T$ having the same Turing degree  as $A$.
In \cite{Shoenfield 58}, Shoenfield improves Feferman's result and shows that if $A$ is a non-recursive RE set, then there is an essentially undecidable theory having the same Turing degree as $A$.

\begin{theorem}[Shoenfield, \cite{Shoenfield 58}]\label{Shoenfield-1}
Suppose $A$ is a non-recursive RE set. Then: 
\begin{enumerate}[(1)]
  \item there is a recursively inseparable pair $\langle B, C\rangle$ such that $A$, $B$ and $C$ have the same Turing degree; 
  \item there is a consistent axiomatizable theory $T$ having one non-logical symbol which is essentially undecidable and has the same Turing degree as $A$.
\end{enumerate}
\end{theorem}

Thus, the structure $\langle {\sf D_T}, \leq_{\sf T}\rangle$ is as complex as the Turing degree structure of RE sets.

Janiczak's theory {\sf J} is introduced in \cite[p.136]{Janiczak}, and is used in Shoenfield's proof of Theorem \ref{Shoenfield-1} in \cite{Shoenfield 58}.
The theory {\sf J}  has only  one binary relation symbol $E$ with the
following axioms.
\begin{description}
  \item[{\sf J1}] $E$ is an equivalence relation.
  \item[{\sf J2}] There is at most one equivalence class of size precisely $n$.
  \item[{\sf J3}] There are at least $n$ equivalence classes with at least $n$ elements.\footnote{Our presentation of the theory {\sf J} follows \cite{PV}. We include the axiom {\sf J3} to make the proof of the following fact in Theorem \ref{J thm} more easy: over {\sf J}, every sentence is equivalent with a boolean combination of the $\Phi_n$'s.}
\end{description}
Let $\Phi_n$ denote the sentence: there exists an equivalence class of size precisely
$n + 1$. Note that the $\Phi_n$'s are mutually independent over {\sf J}.

\begin{theorem}[Janiczak, \cite{Janiczak}]~\label{J thm}
\begin{itemize}
  \item {\sf J} is decidable (see \cite[Theorem 4]{Janiczak}).
  \item Over {\sf J}, every sentence is equivalent with a Boolean combination of the $\Phi_n$'s (see \cite[Lemma 2]{Janiczak}), and  this Boolean combination  can be found explicitly from the given sentence.\footnote{This is a reformulation of Janiczak's Lemma 2 in \cite{Janiczak} in the context of {\sf J}. Janiczak's Lemma is proved by means of a method known as the elimination of quantifiers.}.
\end{itemize}
\end{theorem}

\begin{theorem}[]\label{general S thm}
Suppose $A$ is a non-recursive RE set. Then there is a partial recursive function $g$ such that the following holds, where $B_n=\{x: g(x)=n\}$: 
\begin{enumerate}[(1)]
  \item $(B_n, B_m)$ is a recursively inseparable pair for $m\neq n$ and each $B_n$ has the same Turing degree as $A$; 
  \item The theory $T_{(B_n, B_m)}={\sf J}+ \{\Phi_i: i\in B_n\} + \{\neg\Phi_i: i\in B_m\}$ for any $m\neq n$ is recursively inseparable and has the same Turing degree as $A$.
\end{enumerate}
\end{theorem}
\begin{proof}\label{}
Our proof is inspired by Exercise 10-20(ii)  in p.178 of \cite{Rogers87}.
Suppose $A$ is a non-recursive RE set. 
Let $f$ be the recursive function that enumerates $A$ without repetitions. Define a function $g$ as follows:
$g(\langle x,y\rangle)=n$ iff for some $s$, $f(s)=x$ and the program $\phi_y$ with input $\langle x,y\rangle$ yields $n$ as output in $< s$ steps. 

Note that $g$ is partial recursive. Define $B_n=\{x: g(x)=n\}$. Note that for any $n\neq m$, $B_n$ and $B_m$  are disjoint RE sets.

We first show that for any $m\neq n$, $(B_n, B_m)$ is a recursively inseparable pair and $B_n\equiv_{\sf T} B_m \equiv_{\sf T} A$.

\begin{claim}
$B_n\leq_{\sf T} A$.
\end{claim}
\begin{proof}\label{}
We test $\langle x,y\rangle\in B_n$ as follows. If $x\notin A$, then $\langle x,y\rangle\notin B_n$. Suppose $x\in A$. Take the unique $s$ such that $f(s)=x$. We can decide whether the program $\phi_y$ with input $\langle x,y\rangle$ yields $n$ in $< s$ steps. If yes, then $\langle x,y\rangle\in B_n$; if no, then $\langle x,y\rangle\notin B_n$.
\end{proof}

\begin{claim}
$A\leq_{\sf T} B_n$.
\end{claim}
\begin{proof}\label{}
We test $x\in A$ as follows. Suppose the index of the program with constant output value $n$ is $e_0$, i.e. $\phi_{e_0}(x)=n$ for all $x$. If $\langle x, e_0\rangle\in B_n$, then $x\in A$. Suppose $\langle x, e_0\rangle\notin B_n$, let $w$ be the number of computation steps such that $\phi_{e_0}(\langle x, e_0\rangle)=n$. Decide whether $f(s)=x$ and $s\leq w$ for some $s$. If yes, then $x\in A$; if no, then $x\notin A$.
\end{proof}

\begin{claim}
There is no recursive set $X$ that separates $B_n$ and $B_m$ (i.e., $B_n\subseteq X$ and $X\cap B_m=\emptyset$).
\end{claim}
\begin{proof}\label{}
Suppose not, i.e., we can find a recursive function $h$ such that ${\sf ran}(h)=\{m,n\}$, $h[B_n]=\{m\}$ and $h[B_m]=\{n\}$. Let $h=\phi_{e_1}$.
\begin{subclaim}
If $x\in A$, then the program $\phi_{e_1}$ with input $\langle x, e_1\rangle$ yields output in $\geq s$ steps where $f(s)=x$.
\end{subclaim}
\begin{proof}\label{}
Suppose not, i.e., $x\in A$, but the program $\phi_{e_1}$ with input $\langle x, e_1\rangle$ halts in less than $s$ steps where $f(s)=x$. Then by definition, we have $\phi_{e_1}(\langle x, e_1\rangle)=n \Leftrightarrow \phi_{e_1}(\langle x, e_1\rangle)=m$, which leads to a contradiction.
\end{proof}
Let $t$ be the recursive function such that $t(x)=$ the number of steps to compute the value of $\langle x, e_1\rangle$ in the program $\phi_{e_1}$. Then we have:
\[x\in A\Leftrightarrow\exists s (s\leq t(x)\wedge f(s)=x).\]
 Thus, $A$ is recursive which leads to a contradiction.
\end{proof}
Thus, for any  $n\neq m$, $(B_n, B_m)$ is a {\sf RI} pair and $B_n\equiv_{\sf T} B_m \equiv_{\sf T} A$.

Define the theory $T_{(B_n, B_m)}={\sf J}+ \{\Phi_i: i\in B_n\} + \{\neg\Phi_i: i\in B_m\}$ for any $m\neq n$. Now we show that $T_{(B_n, B_m)}$ has the same Turing degree as $A$.

Note that $T_{(B_n, B_m)}$ is a consistent RE theory. Since the $\Phi_n$'s are mutually independent over {\sf J}, we have $\Phi_i$ is
provable iff $i\in B_n$, and $\neg\Phi_i$ is provable iff $i\in B_m$. Thus, 
$B_n$ and $B_m$ are recursive in $T_{(B_n, B_m)}$. 
By Theorem \ref{J thm}, $T_{(B_n, B_m)}$ is
recursive in $B_n$ and $B_m$. 
Since $B_n\equiv_{\sf T} B_m \equiv_{\sf T} A$, $T_{(B_n, B_m)}\equiv_{\sf T} A$. Since $(B_n, B_m)$ is a {\sf RI} pair, by a standard argument, $T_{(B_n, B_m)}$ is recursively inseparable. 
\end{proof}

Theorem \ref{general S thm} improves Theorem \ref{Shoenfield-1} and shows that Shoenfield's theory $T$ in Theorem \ref{Shoenfield-1} is not unique. Since there are only countably many RE theories, Theorem \ref{general S thm} is the best result we can have. 

We list some  key results about the Turing degree structure of RE sets in the literature.

\begin{fact}[\cite{Rogers87}]~\label{key fact in recursion theory}
\begin{enumerate}[(1)]
  \item The RE degrees are dense: for any RE sets $A<_{\sf T} B$, there is a RE set $C$ such that $A <_{\sf T} C <_{\sf T} B$ (Sacks, Theorem 4.1, \cite{Soare}).
  \item There are no minimal non-zero RE degrees (follows from (1)).
  \item For any RE set  $\mathbf{0}<_{\sf T} A <_{\sf T} \mathbf{0}^{\prime}$, there exists an RE set $B$ such that $B$ is incomparable with $A$ w.r.t. Turing reducibility. Moreover, an index for $B$ can be found effectively from the index for $A$ (Yates, Theorem 4.3, \cite{Soare}).
  \item Given RE sets $A<_{\sf T} B$, there is an infinite RE sequence of RE sets $C_n$ such that $A<_{\sf T} C_n <_{\sf T} B$ and $C_n$'s are incomparable w.r.t. Turing reducibility (Robinson, p. 147, \cite{Soare}).
  \item If $a$ and $b$ are RE degrees such that $a<b$, then any countably partially ordered set can be embedded in the RE degrees between $a$ and $b$ (Robinson, p. 147, \cite{Soare}).
\end{enumerate}
\end{fact}

\begin{theorem}\label{}
\begin{enumerate}[(1)]
  \item $\langle {\sf D_T}, \leq_{\sf T}\rangle$ is dense: for any $A, B \in {\sf D_T}$ such that $A<_{\sf T} B$, there exist countably many theories $C\in {\sf D_T}$ such that  $A <_{\sf T} C <_{\sf T} B$.
   \item   $\langle {\sf D_T}, \leq_{\sf T}\rangle$ has no minimal elements.
  \item  For any theory $A\in {\sf D_T}$, there exist  countably many theories $B\in {\sf D_T}$ such that $A<_{\sf T} B$. Thus, $\langle {\sf D_T}, \leq_{\sf T}\rangle$ has  no maximal element.
  \item For any theory $A\in {\sf D_T}$, there exist countably many theories $B\in {\sf D_T}$ such that $B$ is incomparable with $A$ w.r.t. Turing reducibility.
  \item Given theories $A, B\in {\sf D_T}$ such that $A<_{\sf T} B$, there is an infinite RE sequence of theories $C_n\in {\sf D_T}$ such that $A<_{\sf T} C_n <_{\sf T} B$ and  $C_n$'s are incomparable w.r.t. Turing reducibility.
       \item Given  theories $A, B\in {\sf D_T}$ such that $A<_{\sf T} B$,  any countably partially ordered set can be embedded in  the structure $\langle {\sf D_T}, \leq_{\sf T}\rangle$ between $A$ and $B$.
           \item $\langle {\sf D_T}, \leq_{\sf T}\rangle$ is a dense distributive lattice without endpoints.
\end{enumerate}
\end{theorem}
\begin{proof}\label{}
(1): follows from Fact \ref{key fact in recursion theory}(1) and Theorem \ref{general S thm}. 

(2): follows from Fact \ref{key fact in recursion theory}(2).

(3): follows from Fact \ref{key fact in recursion theory}(1) and Theorem \ref{general S thm}. 

(4): follows from Fact \ref{key fact in recursion theory}(3) and Theorem \ref{general S thm}. 

(5): follows from Fact \ref{key fact in recursion theory}(4). 

(6): follows from Fact \ref{key fact in recursion theory}(5). 

(7): follows from (1)-(3). 
\end{proof}

\section{The structure $\langle {\sf D_I}, \lhd\rangle$}

The interpretation degree structure of essentially incomplete RE theories weaker than the theory $\mathbf{R}$ is much more complex. In this section, we answer all questions proposed in  \cite{Cheng20}, and prove more results about $\langle {\sf D_I}, \lhd\rangle$.

\begin{theorem}[\cite{Cheng20}]\label{main thm}
For any recursively inseparable pair $(A,B)$, there exists a consistent RE theory $T_{(A,B)}$ such that $\sf G1$  holds for $T_{(A,B)}$ and $T_{(A,B)}\lhdneq\mathbf{R}$.
\end{theorem}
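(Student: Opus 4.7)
The plan is to build $T$ as the atomic diagram of $\langle A, B\rangle$ over a countable set of constants. Work in the recursive signature $L = \{P\} \cup \{c_n : n \in \omega\}$, with $P$ unary, and let $T$ be the r.e.\ theory axiomatized by $\{P(c_n) : n \in A\} \cup \{\neg P(c_n) : n \in B\}$. Observe at once that $T$ has a two-element model $M = \{a, b\}$ with $P^M = \{a\}$ and $c_n^M = a$ for $n \in A$, $c_n^M = b$ for $n \in B$ (and arbitrarily otherwise); this is well-defined because $A\cap B=\emptyset$, so $T$ is consistent and locally finitely satisfiable.

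To verify $T \unlhd \mathbf{R}$, I would exhibit the interpretation directly rather than invoke Visser's theorem, which is stated for finite signatures. Fix indices $A = W_a$ and $B = W_b$, and consider the Rosser-style $\Sigma_1$-formula
\[
\psi(x) := \exists t \bigl(T_1(\overline{a}, x, t) \wedge \forall s \leq t\, \neg T_1(\overline{b}, x, s)\bigr)
\]
in $L(\mathbf{R})$. Setting $\delta_\iota(x) := (x=x)$, $P^\iota(x) := \psi(x)$, and $c_n^\iota := \overline{n}$: for $n \in A$, $\psi(\overline{n})$ is a true $\Sigma_1$-sentence, hence provable in $\mathbf{R}$ by $\Sigma_1$-completeness. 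For $n \in B$, picking a witness $s_n$ with $T_1(b, n, s_n)$, the $\Sigma_0$-fact $T_1(\overline{b}, \overline{n}, \overline{s_n})$ is provable in $\mathbf{R}$, and by Ax4--Ax5 every $t$ satisfies $t \leq \overline{s_n}$ or $\overline{s_n} \leq t$: on the bounded range, $\neg T_1(\overline{a}, \overline{n}, \overline{t})$ is a provable $\Sigma_0$-fact (since $n \notin A$), and on the upper tail $\overline{s_n}$ itself witnesses $\exists s \leq t\, T_1(\overline{b}, \overline{n}, s)$. Together these give $\mathbf{R} \vdash \neg\psi(\overline{n})$, so $\mathbf{R}$ proves every $\iota$-translated axiom of $T$.

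That $\sf G1$ holds for $T$ (equivalently, $T$ is essentially undecidable) is a direct separation argument: for any consistent r.e.\ extension $T' \supseteq T$ in $L$, the r.e.\ set $X := \{n : T' \vdash P(c_n)\}$ contains $A$ and, by consistency, is disjoint from $B$, so if $T'$ were decidable then $X$ would separate $\langle A, B\rangle$ recursively, contradicting RI. For the remaining point $\mathbf{R} \not\unlhd T$, observe that any interpretation $\iota'$ of $\mathbf{R}$ in $T$ would force $T$ to prove $(\overline{m} \neq \overline{n})^{\iota'}$ for all $m \neq n$; evaluating these in the two-element model $M$ would then place infinitely many pairwise $=_{\iota'}^M$-inequivalent witnesses inside $\delta_{\iota'}(M) \subseteq M$, which is impossible because $|M| = 2$. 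Combining the last two paragraphs yields $T \lhd \mathbf{R}$.

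The hard part is the second paragraph, namely producing a single $L(\mathbf{R})$-formula that $\mathbf{R}$ provably decides correctly on $\overline{n}$ for $n \in A \cup B$. The $\Sigma_1$-half ($n \in A$) comes for free from $\Sigma_1$-completeness, but the $\Pi_1$-half ($n \in B$) does not, and it is there that the Rosser-style bookkeeping combined with the Ax4--Ax5 case split does the real work. The other three pieces---consistency, essential undecidability from RI, and the finite-model refutation of $\mathbf{R} \unlhd T$---are routine once the construction of $T$ and the interpretation $\iota$ are in place.
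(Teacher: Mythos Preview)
Your theory $T$ carries an infinite signature (the constants $c_n$), and this is where the argument breaks against the paper's framework, which works throughout with finite signatures. You invoke the equivalence ``${\sf G1}$ holds for $T$'' $\Leftrightarrow$ ``$T$ is essentially undecidable,'' but that equivalence rests on the transfer of essential undecidability along interpretations (Theorem~\ref{interpretable theorem}), and the paper explicitly footnotes that this transfer fails for infinite signatures. Concretely, your separation argument only treats consistent r.e.\ \emph{extensions} $T'\supseteq T$ in $L$; for ``${\sf G1}$ holds'' you must handle every consistent r.e.\ $S$ (in any language) that interprets $T$. With infinitely many constants an interpretation $I$ need not give a recursive map $n\mapsto c_n^I$, so the set $\{n: S\vdash P^I(c_n^I)\}$ need not be recursive even when $S$ is decidable, and the RI contradiction does not fire. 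You spotted the finite-signature hypothesis in Visser's theorem and routed around it with the Rosser formula; the same hypothesis bites you again here, and this time there is no easy patch within your construction.

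The paper does not reprove the statement on the spot (it is cited from \cite{Cheng20}), but it does prove the stronger Theorem~\ref{main key thm}, whose construction specializes to it. There one stays inside a \emph{finite} signature by using Janiczak's theory ${\sf J}$ (a single binary relation $E$) and setting $T_{(A,B)}={\sf J}+\{\Phi_n:n\in A\}+\{\neg\Phi_n:n\in B\}$, where $\Phi_n$ asserts the existence of an equivalence class of size exactly $n+1$. The sentences $\Phi_n$ play exactly the role your atomic sentences $P(c_n)$ play, but without introducing new symbols; Janiczak's quantifier-elimination (Lemma~\ref{Janiczak's Lemma}) supplies the decidability of ${\sf J}$ and the reduction of arbitrary sentences to boolean combinations of the $\Phi_n$. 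With the signature finite, local finite satisfiability plus Theorem~\ref{visser thm on R} gives $T_{(A,B)}\unlhd\mathbf{R}$, Lemma~\ref{hereditarily undecidable} (via the decidable subtheory ${\sf J}$) gives $\mathbf{R}\not\unlhd T_{(A,B)}$, and essential undecidability now legitimately yields ``${\sf G1}$ holds.'' Your Rosser interpretation and your finite-model refutation of $\mathbf{R}\unlhd T$ are both correct in spirit and would transplant cleanly to $T_{(A,B)}$; the piece that genuinely needs the Janiczak device is the signature.
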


Since by Theorem \ref{general S thm} there are countably many recursively inseparable pairs, by Theorem \ref{main thm}, the cardinality of ${\sf D_I}$ is $\aleph_0$.

Theorem \ref{Shoenfield first} is a reformulation of the proof of Theorem 1 in \cite{Shoenfield 58}. For proof details, we refer to \cite{PV}. 

\begin{theorem}[Shoenfield, \cite{Shoenfield 58}]\label{Shoenfield first}
Let  $A$ be a RE set. Then we can effectively find a disjoint pair $(B, C)$ of RE sets such that: 
\begin{enumerate}[(1)]
  \item $B, C\leq_{\sf T} \, A$; 
  \item for any RE set $D$ that separates $B$ and $C$ (i.e., $B\subseteq D$ and $D\cap C=\emptyset$), we have $A\leq_{\sf T} \, D$;
      \item  If $A$ is non-recursive, then $(B, C)$ is  recursively inseparable.
\end{enumerate}
\end{theorem}

There is no direct relation between the notion of interpretation and the notion of Turing reducibility. Given RE theories $U$ and $V$, $U\lhd V$ does not imply $U\leq_{\sf T} V$, and $U\leq_{\sf T} V$ does not imply $U\lhd V$. Now, we introduce the notion of Turing persistence which establishes the relationship between $U\lhd V$ and $U \leq_{\sf T} V$.

\begin{definition}[\cite{PV}]~\label{}
We say a RE theory $U$ is Turing persistent if for any consistent RE theory $V$, if $U\subseteq V$, then $U \leq_{\sf T} V$.
\end{definition}

Note that if $U$ is Turing persistent, then for any RE theory $V$, if $U\lhd V$, then $U \leq_{\sf T} V$. Note also that since any consistent RE theory interpreting $\mathbf{R}$ is ${\sf EI}$ and  has the Turing degree $\mathbf{0}^{\prime}$, hence any consistent RE theory interpreting $\mathbf{R}$ is Turing persistent.

Now we answer Question \ref{key qn}(1) in \cite{Cheng20}: can we show that for any Turing degree $\mathbf{0}<\mathbf{d}\leq\mathbf{0}^{\prime}$, there is a theory $U$ such that $\sf G1$  holds for $U$, $U\lhdneq \mathbf{R}$ and $U$ has Turing degree $\mathbf{d}$? Proposition \ref{main key thm} answers this question positively.

\begin{lemma}\label{hereditarily undecidable}
Let $T$ be a consistent RE theory. If $T$ tolerates $\mathbf{R}$, then $T$ is hereditarily undecidable.
\end{lemma}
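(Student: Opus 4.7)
The plan is to reduce the claim to Cobham's theorem (Theorem \ref{Cobham thm}) via the notion of weak interpretation. Fix an interpretation $\tau$ of $\mathbf{R}$ in $T$ and let $S$ be an arbitrary sub-theory of $T$ with $L(S)=L(T)$. If $S$ is not r.e.\ it cannot be recursive and hence is automatically undecidable, so I will treat the non-trivial case where $S$ is r.e. Being a sub-theory of the consistent theory $T$, the theory $S$ is itself consistent.

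The key observation I intend to establish is that $S$ \emph{weakly} interprets $\mathbf{R}$. Let $\Sigma$ denote the recursive set of $L(T)$-sentences consisting of the totality axioms for $\tau$ on the function symbols of $L(\mathbf{R})$ together with the $\tau$-translations of all axioms of $\mathbf{R}$ and of equality. By the definition of interpretation, $T$ proves every member of $\Sigma$, so $S+\Sigma$ is contained in $T$ and is therefore a consistent r.e.\ extension of $S$ in $L(S)$. By construction, $\tau$ is an interpretation of $\mathbf{R}$ in $S+\Sigma$, which is exactly the assertion that $S$ weakly interprets $\mathbf{R}$.

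Cobham's theorem then yields that the consistent r.e.\ theory $S$ is undecidable. Since $S$ was arbitrary, $T$ is hereditarily undecidable. The argument is short once Cobham's theorem is invoked, and no genuine obstacle is expected; the one piece of bookkeeping to watch is that $\Sigma$ must include the totality axioms (and the equality axioms) and not merely the translations of the $\mathbf{Ax}\,i$, so that $\tau$ remains a legitimate interpretation after the ambient theory is weakened from $T$ down to $S+\Sigma$.
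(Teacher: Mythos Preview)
Your proof is correct and follows essentially the same route as the paper: show that any sub-theory $S$ of $T$ in the same language weakly interprets $\mathbf{R}$, and then invoke Cobham's theorem. The paper argues by contradiction (assuming $S$ decidable, hence r.e., and reaching the same conclusion), whereas you give a direct argument with an explicit case split on whether $S$ is r.e.\ and spell out the construction of $S+\Sigma$; the mathematical content is identical.
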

\begin{proof}\label{}
Suppose $T$ tolerates $\mathbf{R}$, but $T$ is not hereditarily undecidable. Then there exists a sub-theory $S$ of $T$ with the same language as $T$ such that $S$ is decidable. Since $T$ tolerates $\mathbf{R}$, $S$ tolerates $\mathbf{R}$. By Theorem \ref{Cobham thm}, $S$ is undecidable, which leads to a contradiction.
\end{proof}

Proposition \ref{main key thm}(1) is due to Shoenfield (see Theorem 4.9 in \cite{PV}). For completeness, we give full details of the properties of the theory $T_{\mathbf{d}}$ in Proposition \ref{main key thm}. 

\begin{proposition}\label{main key thm}
Let $A$ be a RE set with Turing degree $\mathbf{d}$. Then we can effectively find a consistent RE theory $T_{\mathbf{d}}$ having one non-logical symbol such that 
\begin{enumerate}[(1)]
  \item $T_{\mathbf{d}}$ has the same Turing degree as $A$, and $T_{\mathbf{d}}$ is Turing persistent (see Theorem 4.9 in \cite{PV});
  \item $T_{\mathbf{d}}\lhdneq\mathbf{R}$;
\item  If $A$ is not recursive, then $T_{\mathbf{d}}$ is $\sf RI$. 
\end{enumerate}
\end{proposition}
\begin{proof}\label{}
Suppose $A$ is a RE set with Turing degree $\mathbf{d}$, $(B, C)$ is the disjoint pair  of RE sets effectively constructed from $A$ as in Theorem \ref{Shoenfield first}. 

(1) We show that $T_{\mathbf{d}}$ has the same Turing degree as $A$.
Define the theory $T_{(B,C)}={\sf J}+ \{\Phi_n: n\in B\} + \{\neg\Phi_n: n\in C\}$.
Let $T_{\mathbf{d}}$ denote the theory $T_{(B,C)}$. Note that $T_{\mathbf{d}}$ is a consistent RE theory. Since the $\Phi_n$'s are mutually independent over {\sf J}, we have   $\Phi_n$ is
provable iff $n\in B$, and $\neg\Phi_n$ is provable iff $n \in C$. Thus, 
$B$ and $C$ are recursive in $T_{\mathbf{d}}$.

By Theorem \ref{J thm}, $T_{\mathbf{d}}$ is
recursive in $B$ and $C$. 
Since $B, C\leq_{\sf T} A$ by Theorem \ref{Shoenfield first}, we have $T_{\mathbf{d}}$ is recursive in $A$. Since $B$ separates $B$ and $C$,  by Theorem \ref{Shoenfield first}, $A\leq_{\sf T} \, B$. Thus, since $B$ is recursive in $T_{\mathbf{d}}$, $T_{\mathbf{d}}$ has the same Turing degree as $A$.

Now we show that $T_{\mathbf{d}}$ is Turing persistent. 
Suppose $V$ is a consistent RE extension of $T_\mathbf{d}$. Define $D=\{n: V\vdash\Phi_n\}$. Note that $B\subseteq D$ and $D\cap C=\emptyset$. By Theorem \ref{Shoenfield first}, $A\leq_{\sf T} D$. Since $T_\mathbf{d}$ has the same Turing degree as $A$, we have $T_\mathbf{d}\equiv_{\sf T} A \leq_{\sf T} D \leq_{\sf T} V$. Thus, $T_\mathbf{d}$ is Turing persistent.

(2) We show that $T_{\mathbf{d}}\lhdneq \mathbf{R}$. By Theorem \ref{J thm}, every sentence is equivalent with a Boolean combination of the $\Phi_n$'s  over {\sf J}. Since the $\Phi_n$'s are mutually independent over {\sf J}, any finitely axiomatized sub-theory of $T_\mathbf{d}$
has a finite model. Thus, $T_\mathbf{d}$ is locally finitely satisfiable. Thus, by Theorem \ref{visser thm on R}, we have $T_\mathbf{d}\lhd\mathbf{R}$. We show that $\mathbf{R}$ is not interpretable in $T_\mathbf{d}$.
Note that the theory {\sf J}  is a decidable sub-theory of $T_\mathbf{d}$  in the same language. Since $T_\mathbf{d}$ is not hereditarily undecidable, by Lemma \ref{hereditarily undecidable}, $\mathbf{R}$ is not interpretable in $T_\mathbf{d}$. Thus $T_\mathbf{d}\lhdneq\mathbf{R}$.

(3) We show that if $A$ is non-recursive, then $T_{\mathbf{d}}$ is $\sf RI$. 
By Theorem \ref{Shoenfield first}, $(B,C)$ is ${\sf RI}$.  We  denote the theory $T_\mathbf{d}$ by $T$.
Define $g: n\mapsto \ulcorner \Phi_n\urcorner$. Note that $g$ is recursive, if $n\in B$, then $g(n)\in T_P$, and if $n\in C$, then $g(n)\in T_R$.
We show that $T$ is ${\sf RI}$. Suppose $T$ is not ${\sf RI}$, i.e., there is a recursive set $X$ such that $T_P\subseteq X$ and $X\cap T_R=\emptyset$. Note that $B\subseteq g^{-1}[T_P]\subseteq g^{-1}[X]$ and $C\subseteq g^{-1}[T_R]\subseteq g^{-1}[\overline{X}]=\overline{g^{-1}[X]}$. Since $g$ and $X$ are recursive, $g^{-1}[X]$ is recursive. Thus, $g^{-1}[X]$ is a recursive set separating $B$ and $C$, which  contradicts that $(B,C)$ is ${\sf RI}$. Thus, $T_{\mathbf{d}}$ is $\sf RI$.

Finally, we remark that the construction of the theory $T_\mathbf{d}$ is effective: there exists a recursive function $f$ such that if $A=W_e$, then there exists a RE theory $T$ with index $f(e)$ such that $T$ has the properties stated in Proposition \ref{main key thm}.
\end{proof}

\begin{corollary}
There is no minimal recursively inseparable (${\sf RI}$) RE theory w.r.t. Turing reducibility.
\end{corollary}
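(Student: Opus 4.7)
The plan is to reduce the statement to the two ingredients already established in this section: Theorem~\ref{main key thm} and the recursion-theoretic fact (Fact~\ref{key fact in recursion theory}) that there are no minimal r.e.\ Turing degrees. Let $S$ be an arbitrary $\sf RI$ r.e.\ theory; I want to exhibit an $\sf RI$ r.e.\ theory $T$ with $T<_T S$.

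First I would observe that any $\sf RI$ theory $S$ is undecidable: if $S_P$ were recursive then $S_P$ would itself be a recursive set separating $S_P$ from $S_R$, contradicting that $(S_P,S_R)$ is $\sf RI$. Consequently the Turing degree $\mathbf{d}_S$ of $S$ is a non-recursive r.e.\ degree, i.e.\ $\mathbf{0}<\mathbf{d}_S\le \mathbf{0}'$. Since there are no minimal r.e.\ Turing degrees (Fact~\ref{key fact in recursion theory}), I may pick some r.e.\ degree $\mathbf{d}'$ with $\mathbf{0}<\mathbf{d}'<_T\mathbf{d}_S$.

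Now I apply Theorem~\ref{main key thm} to $\mathbf{d}'$. This gives an r.e.\ theory $T_{\mathbf{d}'}$ which (i) has Turing degree exactly $\mathbf{d}'$ and (ii) is $\sf RI$ (clause (4) of the theorem). Since the Turing degrees coincide with the Turing-reducibility ordering at the degree level, $\deg_T(T_{\mathbf{d}'})=\mathbf{d}'<_T\mathbf{d}_S=\deg_T(S)$ implies $T_{\mathbf{d}'}<_T S$. Thus $T_{\mathbf{d}'}$ witnesses that $S$ is not minimal among $\sf RI$ r.e.\ theories with respect to Turing reducibility.

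There is essentially no obstacle here beyond recognising that Theorem~\ref{main key thm} is strong enough to realise \emph{every} non-recursive r.e.\ degree by an $\sf RI$ theory; the only point requiring care is the initial sanity check that $S$ itself sits strictly above $\mathbf{0}$ in the Turing degrees, which as noted follows directly from the definition of $\sf RI$. Moreover, since Theorem~\ref{main key thm} is effective and the non-minimality of r.e.\ degrees is witnessed by an effectively obtainable index from $\mathbf{d}_S$, the construction of $T_{\mathbf{d}'}$ from a recursive index for $S$ is itself effective, giving the stronger statement that from any $\sf RI$ r.e.\ theory one can effectively produce a strictly Turing-weaker $\sf RI$ r.e.\ theory.
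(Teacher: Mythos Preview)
Your argument is correct and is exactly the intended one: the paper states this as an immediate corollary of Theorem~\ref{main key thm} together with the non-existence of minimal r.e.\ degrees, and your write-up spells out precisely that reduction (including the sanity check that an $\sf RI$ theory is undecidable). The only caveat is your final paragraph on effectiveness: Fact~\ref{key fact in recursion theory} as stated in the paper does not assert that an index for a strictly smaller non-recursive r.e.\ degree can be found effectively from an index for $S$, so that stronger uniform conclusion goes a bit beyond what the paper's cited facts support.
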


Note that for Turing persistent theories, if they are comparable w.r.t. interpretation, then they are comparable w.r.t. Turing reducibility. From Proposition \ref{main key thm}, given Turing incomparable non-recursive RE sets, we can find incomparable RE theories in ${\sf D_I}$ w.r.t. interpretation. Theorem \ref{incomparable ele} answers the following question in \cite{Cheng20} negatively: are elements of $\langle {\sf D_I}, \lhd\rangle$ comparable?

\begin{theorem}\label{incomparable ele}
Given RE sets $A<_{\sf T} B$, there is a sequence of   RE theories $\langle S_n: n\in\omega\rangle$ such that:
\begin{enumerate}[(1)]
  \item $S_n\in {\sf D_I}$;
  \item $A<_{\sf T} S_n <_{\sf T} B$;
  \item $S_n$ is Turing persistent;
  \item $S_n$ are incomparable w.r.t.~ interpretation.
\end{enumerate}
\end{theorem}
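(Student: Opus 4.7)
The plan is to combine the effective construction from Theorem \ref{main key thm} with the density/incomparability results for r.e.\ Turing degrees recorded in Fact \ref{key fact in recursion theory}. The intuition is that once we produce theories whose interpretation behavior is controlled by their Turing behavior (via Turing persistence), incomparability of Turing degrees immediately upgrades to incomparability w.r.t.\ interpretation.

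Concretely, I would proceed as follows. First, apply Fact \ref{key fact in recursion theory} (the clause stating that for $A <_T B$ there is an infinite sequence of r.e.\ sets $C_n$ with $A <_T C_n <_T B$ that are pairwise Turing incomparable) to obtain such a sequence $\langle C_n : n \in \omega\rangle$, letting $\mathbf{d}_n$ denote the Turing degree of $C_n$. Second, for each $n$ invoke Theorem \ref{main key thm} with the degree $\mathbf{d}_n$ (note $\mathbf{0} < \mathbf{d}_n \leq \mathbf{0}'$) to produce a consistent r.e.\ theory $S_n := T_{\mathbf{d}_n}$ which is in $\sf D_I$, has Turing degree $\mathbf{d}_n$, is Turing persistent, and satisfies $S_n \lhd \mathbf{R}$. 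Items (1), (2), (3) of the theorem are then immediate from the conclusions of Theorem \ref{main key thm} together with $A <_T \mathbf{d}_n <_T B$.

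The remaining task is to verify pairwise incomparability w.r.t.\ interpretation. Suppose toward contradiction that $S_n \unlhd S_m$ for some $n \neq m$ via an interpretation $\tau$. Then the set $V := \{\phi \in L(S_n) : S_m \vdash \phi^\tau\}$ is a consistent r.e.\ extension of $S_n$ in the language $L(S_n)$, and $V \leq_T S_m$ because $\phi \mapsto \phi^\tau$ is recursive. Turing persistence of $S_n$ then gives $S_n \leq_T V \leq_T S_m$, so $\mathbf{d}_n \leq \mathbf{d}_m$, contradicting Turing incomparability of $C_n$ and $C_m$. Thus the $S_n$ are pairwise interpretation-incomparable, completing the proof.

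The only subtle point, and the place where I expect to spend most of the care, is the passage from $S_n \unlhd S_m$ to $S_n \leq_T S_m$ via Turing persistence; the rest is a bookkeeping combination of two previously established results. Once one observes that an interpretation $\tau$ of $S_n$ in $S_m$ yields a same-language consistent r.e.\ extension of $S_n$ that is Turing reducible to $S_m$, the argument closes cleanly. No new recursion-theoretic construction is required beyond what Theorem \ref{main key thm} and Fact \ref{key fact in recursion theory} already supply.
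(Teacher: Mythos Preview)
Your proposal is correct and follows essentially the same approach as the paper: obtain a Turing-incomparable sequence $\langle C_n\rangle$ strictly between $A$ and $B$ via Fact~\ref{key fact in recursion theory}, apply Theorem~\ref{main key thm} to each $C_n$, and then use Turing persistence to lift Turing incomparability to interpretation incomparability. Your explicit construction of the pullback theory $V=\{\phi: S_m\vdash\phi^\tau\}$ is exactly the standard unpacking of the remark (stated in the paper just before this theorem) that Turing persistence of $U$ yields $U\unlhd V\Rightarrow U\leq_T V$.
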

\begin{proof}\label{}
By Fact \ref{key fact in recursion theory}(4), there exists a sequence of RE sets $\langle C_n: n\in\omega\rangle$ such that $A<_{\sf T} C_n <_{\sf T} B$ and $C_n$ are incomparable w.r.t. Turing reducibility.
By Proposition \ref{main key thm}, for each $n$, we can find a Turing persistent RE theory $S_n\in {\sf D_I}$ with the same Turing degree as $C_n$. Thus, $A<_{\sf T} S_n <_{\sf T} B$ for each $n$. Since each $S_n$ is Turing persistent, and $C_n$'s are incomparable w.r.t. Turing reducibility, we have $S_n$'s are incomparable w.r.t. interpretation.
\end{proof}

As a corollary of Theorem \ref{incomparable ele}, there are countably many  incomparable elements of $\langle {\sf D_I}, \lhd\rangle$. This answers Question \ref{key qn}(2).

\begin{proposition}\label{no finite}
If $T\in {\sf D_I}$, then $T$ is not finitely axiomatizable.
\end{proposition}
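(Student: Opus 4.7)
The plan is to argue by contradiction: suppose $T \in {\sf D_I}$ is finitely axiomatized, and derive a complete decidable consistent extension of $T$, contradicting essential incompleteness.

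First I would unpack the hypothesis $T \in {\sf D_I}$. By definition this gives $T \lhd \mathbf{R}$, in particular $T \unlhd \mathbf{R}$. By Theorem \ref{visser thm on R} (Visser), interpretability of a finite-signature r.e.\ theory in $\mathbf{R}$ is equivalent to local finite satisfiability. So $T$ is locally finitely satisfiable: every finitely axiomatized sub-theory of $T$ has a finite model.

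Next I would exploit the assumption that $T$ is finitely axiomatized. Then $T$ is itself a finitely axiomatized sub-theory of $T$, so local finite satisfiability yields a finite model $M$ of $T$. Now consider $\mathrm{Th}(M)$. Since $M$ is finite, truth of any $L(T)$-sentence in $M$ is computable by simply enumerating tuples, so $\mathrm{Th}(M)$ is decidable, hence r.e.; it is consistent (having a model), complete, and extends $T$ (since $M \models T$) in the same language.

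Finally, I would appeal to the definition of ${\sf D_I}$: because ${\sf G1}$ holds for $T$, the theory $T$ is essentially incomplete, so every consistent r.e.\ extension of $T$ in $L(T)$ must be incomplete. The extension $\mathrm{Th}(M)$ just constructed is consistent, r.e., and complete, giving the desired contradiction. Thus no $T \in {\sf D_I}$ can be finitely axiomatized, i.e. $D_{\mathrm{finite}} \cap {\sf D_I} = \emptyset$. There is no serious obstacle here; the proof is a one-paragraph combination of Visser's characterization with the trivial fact that the complete theory of a finite structure is decidable.
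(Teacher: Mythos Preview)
Your proof is correct and follows essentially the same route as the paper's: both argue by contradiction, use Visser's characterization (Theorem~\ref{visser thm on R}) to obtain a finite model of the finitely axiomatized $T$, and then observe that this contradicts essential undecidability/incompleteness. The paper's version is simply terser, stopping at ``$S$ has a finite model, which contradicts the fact that $S$ is essentially undecidable,'' while you spell out the implicit step via $\mathrm{Th}(M)$.
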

\begin{proof}\label{}
Suppose $T\in {\sf D_I}$, but $T$ is finitely axiomatizable. Since $T\lhd \mathbf{R}$, $T$ is locally finitely satisfiable. Since $T$ is finitely axiomatizable, $T$ has a finite model, which contradicts the fact that $T$ is essentially undecidable.
\end{proof}

Proposition \ref{no finite} shows that there is no finitely axiomatizable theory interpretable in $\mathbf{R}$ for which $\sf G1$ holds.

\begin{lemma}[Lemma 4.8 in \cite{Cheng20}; Theorem 2.2 in \cite{PV}]~\label{key lemma}
For RE theories $A$ and $B$, if $\sf G1$ holds for both $A$ and $B$, then $\sf G1$ holds for $A\oplus B$.
\end{lemma}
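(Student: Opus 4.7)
Using the proposition cited in the introduction, $\sf G1$ holding for $A \oplus B$ is equivalent to essential undecidability of $A \oplus B$. So I will fix an arbitrary consistent r.e. extension $V$ of $A \oplus B$ in the language $L(A \oplus B) = L(A) \sqcup L(B) \cup \{P\}$, assume toward a contradiction that $V$ is decidable, and use the hypothesis to extract from $V$ either a decidable consistent r.e. extension of $A$ (in $L(A)$) or a decidable consistent r.e. extension of $B$ (in $L(B)$), contradicting essential undecidability of $A$ or of $B$.

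The key observation is that $P$ is a 0-ary predicate and $A \oplus B$ axiomatises the disjunction ``$P$ gives $A$, $\neg P$ gives $B$'', so any consistent extension of $A\oplus B$ must be consistent with at least one of $P$, $\neg P$. I split into two cases accordingly. In Case 1, $V + P$ is consistent; I set $V_A := \{\varphi \in L(A) : V + P \vdash \varphi\}$ and check three things: (i) $V_A$ extends $A$, because each axiom $\varphi$ of $A$ appears in $A \oplus B$ in the form $P \to \varphi$, so $V + P \vdash \varphi$; (ii) $V_A$ is consistent, being a sub-theory of the consistent theory $V+P$; (iii) $V_A$ is decidable, because $V+P \vdash \phi \iff V \vdash P\to \phi$ and $V$ is decidable, and restricting to sentences of $L(A)$ preserves decidability. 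This contradicts essential undecidability of $A$.

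In Case 2, $V + P$ is inconsistent, so $V \vdash \neg P$; the symmetric argument with $V_B := \{\psi \in L(B) : V \vdash \psi\}$ works because each axiom $\psi$ of $B$ occurs as $\neg P \to \psi$ in $A\oplus B$, so $V \vdash \psi$. Again $V_B$ is a consistent r.e. decidable extension of $B$ in $L(B)$, contradicting essential undecidability of $B$.

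I expect the proof itself to be short and essentially bookkeeping; the only place where care is needed is in Case 1, where one must confirm that $V+P$ remains r.e. and decidable (not merely $V$), and that $V_A$, obtained by restricting to the sublanguage $L(A)$, is still an extension of $A$ (as opposed to only an extension of the axioms of $A$ reinterpreted inside $L(A \oplus B)$). Since the signatures of $A$ and $B$ are combined by disjoint union and $P$ is just a propositional letter, the sublanguage restriction is harmless, and no interpretation or translation needs to be invoked.
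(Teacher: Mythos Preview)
Your proposal is correct and follows essentially the same route as the paper. The paper also reduces to essential undecidability, takes a consistent decidable extension $U$ of $A\oplus B$, and forms the two sets $\{\phi: U\vdash P\rightarrow\phi\}$ and $\{\psi: U\vdash \neg P\rightarrow\psi\}$; it then argues that at least one of these is consistent (else $U\vdash P\rightarrow\bot$ and $U\vdash\neg P\rightarrow\bot$, so $U\vdash\bot$), yielding a consistent decidable extension of $A$ or of $B$. Your case split on the consistency of $V+P$ is just a repackaging of that same observation, and your explicit restriction to the sublanguages $L(A)$, $L(B)$ is a point the paper leaves implicit.
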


\begin{definition}[\cite{Visser16}]\label{sum of theory}
Given consistent RE theories $U$ and $V$, the supremum theory $U \otimes V$ is defined as follows. 
The signature of $U \otimes V$ is the disjoint union of the signatures of $U$ and
$V$, plus two new unary predicates $\triangle_0$ and $\triangle_1$. The axioms of $U \otimes V$ are: 
\begin{enumerate}[(1)]
  \item  $P(x_0, \cdots, x_{n-1})\rightarrow \bigwedge_{i<n} \triangle_0(x_i)$, if $P$ is derived from the signature of $U$;
  \item $Q(y_0, \cdots, y_{m-1})\rightarrow \bigwedge_{j<m} \triangle_1(x_j)$, if $Q$ is derived from the signature of $V$;
   \item the axioms of $U$ relativized to $\triangle_0$;
  \item the axioms of $V$ relativized to $\triangle_1$;
  \item $\forall x  \,(\triangle_0(x)\vee \triangle_1(x))$;
  \item $\forall x \, \neg (\triangle_0(x)\wedge \triangle_1(x))$.
\end{enumerate}
\end{definition}

We know that the interpretation degree structures  of RE theories and essentially undecidable RE theories with the operators  $\oplus$ and $\otimes$ are a  distributive lattice.

\begin{theorem}\label{no maximal}
Let $A$ be a RE theory  with Turing degree less than $\mathbf{0}^{\prime}$
for which $\sf G1$ holds. Then we can effectively find a Turing persistent theory $S$, which is incomparable with $A$ with respect to Turing degrees, such that $\sf G1$ holds  for $S$ and $A\otimes S \ntriangleleft A$.
\end{theorem}
\begin{proof}\label{}
By Fact \ref{key fact in recursion theory}(3), we can effectively find a RE set $C$ such that $A$ is incomparable with $C$ w.r.t. Turing reducibility. By Proposition \ref{main key thm},  from $C$ we can  effectively find a  Turing persistent theory $S$ with the same Turing degree as $C$ such that  $\sf G1$ holds  for $S$. Let $B=A\otimes S$. Suppose  $B\lhd A$. Since $S\lhd B\lhd A$ and $S$ is Turing persistent,  we have $C\equiv_{\sf T} S\leq_{\sf T} A$, which leads to a contradiction. Thus, $A\lhdneq B$.
\end{proof}

\begin{fact}[Theorem XVI, \cite{Rogers87}]\label{R index}
The set $\{e: {\sf W}_e$ is recursive\} is $\Sigma^0_3$-complete.
\end{fact}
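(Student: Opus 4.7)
The plan is to show both $\mathrm{Rec} := \{e : W_e \text{ is recursive}\} \in \Sigma^0_3$ and $\Sigma^0_3$-hardness, yielding $\Sigma^0_3$-completeness with respect to $m$-reducibility.

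For the upper bound, I would rewrite ``$W_e$ is recursive'' as ``$\overline{W_e}$ is also r.e.'', which unwinds to
\[ \exists i\,\forall x\,(x \in W_e \leftrightarrow x \notin W_i). \]
The matrix is a Boolean combination of $\Sigma^0_1$ atoms (hence $\Delta^0_2$); the $\forall x$ makes it $\Pi^0_2$; the outer $\exists i$ lifts the whole statement to $\Sigma^0_3$.

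For hardness, I would $m$-reduce from the known $\Sigma^0_3$-complete index set $\mathrm{Cof} = \{e : \overline{W_e} \text{ is finite}\}$. The idea is to uniformly construct, from an index $e$, an index $f(e)$ such that $W_{f(e)}$ is recursive iff $\overline{W_e}$ is finite, using the elements of $\overline{W_e}$ as ``coding slots'' into which to inject the standard creative halting set $K$. For each $k \in \omega$ introduce a moving marker $\gamma_k(s)$ defined as the $(k{+}1)$-st smallest element of $\omega \setminus W_e^s$; a direct calculation shows that $\gamma_k(s)$ stabilizes in $s$ if and only if $|\overline{W_e}|>k$. Then enumerate into $W_{f(e)}$ values of the form $\langle k, \gamma_k(s)\rangle$ at stages $s$ where $k$ has entered the stagewise enumeration $K^s$ and a stability test for $\gamma_k$ succeeds; the $s$-$m$-$n$ theorem supplies the recursive $f$. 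In the coinfinite case, every marker stabilizes and the construction yields $K \leq_m W_{f(e)}$, so $W_{f(e)}$ is non-recursive. In the cofinite case with $L = |\overline{W_e}|$, only the markers $\gamma_0,\dots,\gamma_{L-1}$ ever settle and contribute finitely much data, while the higher markers diverge to infinity; the stability test filters their transient values into a recursively patterned set, so $W_{f(e)}$ is recursive.

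The main obstacle is controlling the cofinite case: since the enumeration of $W_e^s$ can be arbitrarily slow, the diverging markers $\gamma_k$ with $k \geq L$ may occupy many intermediate values, and naive enumerations based on them risk encoding non-recursive information into $W_{f(e)}$. The standard fix, as in Rogers' Theorem XVI, is a bookkeeping device that tags contributions of each $\gamma_k$ by a recursive auxiliary coordinate, so that at every stage the full enumeration picture is decidable relative only to information known by that stage; this guarantees the transient enumerations accumulate into a manifestly recursive pattern. Once this is in place, verifying both cases is a routine priority-style argument and yields the required $m$-reduction $\mathrm{Cof} \leq_m \mathrm{Rec}$.
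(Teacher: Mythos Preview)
The paper does not give its own proof of this statement: it is recorded as a \emph{Fact} with a bare citation to Rogers~\cite{Rogers87} (Theorem~XVI) and is used only as a black box in the proof of Theorem~\ref{V complete}. So there is no argument in the paper to compare your proposal against.

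That said, your outline is essentially the standard textbook route. The $\Sigma^0_3$ upper bound via ``$W_e$ recursive iff $\overline{W_e}$ is r.e.'' is correct and routine. For hardness, reducing from $\mathrm{Cof}$ with a movable-marker construction is exactly the classical approach in Rogers; your description of the cofinite case is a bit hand-wavy (the ``stability test'' and the claim that transient marker values accumulate into a recursive pattern would need to be made precise), but you correctly identify this as the delicate point and point to the right fix. If you actually need to supply a proof rather than a citation, filling in those details along the lines you indicate would work; for the purposes of this paper, the citation suffices.
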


\begin{theorem}\label{V complete}
$\{e: \sf G1$ holds for ${\sf W}_e$\} is $\Pi^0_3$-complete.
\end{theorem}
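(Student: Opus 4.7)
The plan is to establish Theorem \ref{V complete} by separately verifying the upper bound $V \in \Pi^0_3$ and the matching $\Pi^0_3$-hardness, with the hardness supplied essentially for free by Theorem \ref{G1 index} together with Fact \ref{R index}.

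For the upper bound, I would invoke the proposition from \cite{Cheng20} recalled in the introduction that ${\sf G1}$ holds for $W_e$ iff $W_e$ is essentially undecidable (vacuously so when $W_e$ is inconsistent, since then no consistent r.e.\ theory extends or interprets it), and unfold this as
\[
e \in V \Longleftrightarrow \forall j \bigl[(W_j \text{ consistent} \wedge W_e \subseteq W_j) \rightarrow W_j \text{ undecidable}\bigr].
\]
Consistency of an r.e.\ theory is $\Pi^0_1$; the inclusion $W_e \subseteq W_j$ between r.e.\ sets of theorems is $\Pi^0_2$ (it has the shape $\forall \phi \forall p \exists q$); and undecidability of an r.e.\ set, being the negation of the $\Sigma^0_3$-complete predicate in Fact \ref{R index}, sits in $\Pi^0_3$. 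The implication in the scope of $\forall j$ therefore lies in $\Sigma^0_2 \vee \Pi^0_3 = \Pi^0_3$, and the outer universal quantifier preserves $\Pi^0_3$, which yields $V \in \Pi^0_3$.

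For the matching lower bound, I would observe that by Fact \ref{R index} the set $\{e : W_e \text{ is recursive}\}$ is $\Sigma^0_3$-complete, hence its complement $U = \{e : W_e \text{ is undecidable}\}$ is $\Pi^0_3$-complete. Theorem \ref{G1 index} exhibits a total recursive function $s$ with $n \in U \Leftrightarrow s(n) \in V$, which is precisely a many-one reduction $U \leq_m V$; consequently $V$ inherits $\Pi^0_3$-hardness from $U$, and combined with the upper bound this gives $\Pi^0_3$-completeness. The substantive effective construction has already been carried out inside Theorem \ref{G1 index}, so no real obstacle remains; the only delicate point is the routine quantifier-counting in the upper bound, whose main input is the standard classification of undecidability at level $\Pi^0_3$, and since we are not optimizing below $\Pi^0_3$ the bookkeeping has considerable slack.
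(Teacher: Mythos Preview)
Your proposal is correct and matches the paper's proof exactly: the paper also derives $\Pi^0_3$-hardness of $V$ from Fact~\ref{R index} (so that $U=\{e:W_e\text{ undecidable}\}$ is $\Pi^0_3$-complete) together with the many-one reduction $U\leq_m V$ supplied by Theorem~\ref{G1 index}, and simply asserts that the upper bound ``is easy to check.'' You have merely filled in that easy check with explicit quantifier-counting, so there is no substantive difference in approach.
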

\begin{proof}\label{}
Define $U=\{e: {\sf W}_e$ is undecidable\} and $V=\{e: \sf G1$ holds for ${\sf W}_e$\}.\footnote{The first  ${\sf W}_e$ in the definition of $U$ stands for a set of numbers, and the  second  ${\sf W}_e$ in the definition of $V$ stands for a theory as a set of theorems.}  By Fact \ref{R index}, $U$ is $\Pi^0_3$-complete. It is easy to check that $V$ is $\Pi^0_3$.

\begin{claim}\label{G1 index}
There is a recursive function $s$ such that for any $n\in \omega$:
\begin{enumerate}[(1)]
  \item  $n\in U\Leftrightarrow s(n)\in V$;
  \item ${\sf W}_n \equiv_{\sf T} {\sf W}_{s(n)}$.
\end{enumerate}
\end{claim}
\begin{proof}\label{}
Recall that the construction of the pair $(B,C)$ from a given RE set $A$  as in Theorem \ref{Shoenfield first} is effective: there are recursive functions $f$ and $g$ such that if $A={\sf W}_e$, then $B={\sf W}_{f(e)}$ and $C={\sf W}_{g(e)}$ such that if $A$ is non-recursive, then $(B,C)$ is recursively inseparable.
Recall that the construction of $T_{(B,C)}$ from a given disjoint pair $(B,C)$ of RE sets  in Proposition \ref{main key thm} is also effective: there is a recursive function $h(x)$ such that if $B={\sf W}_n$ and $C={\sf W}_m$ and $B\cap C=\emptyset$, then $T_{(B,C)}={\sf W}_{h(n,m)}$ such that $T_{(B,C)}$ has the same Turing degree as $A$.

Define $s(n)=h(f(n),g(n))$.
Note that $s$ is recursive.
Suppose $A={\sf W_e}$ and $(B,C)=({\sf W}_{f(e)}, {\sf W}_{g(e)})$  are constructed from $A$ as in Theorem \ref{Shoenfield first}.  Then $s(e)$ is the index of the theory $T_{(B,C)}$ constructed from $(B,C)$. By Proposition \ref{main key thm}, we have ${\sf W}_e \equiv_{\sf T} {\sf W}_{s(e)}$.

Now, we show that $e\in U$ iff $s(e)\in V$.
Suppose $e\in U$. Since ${\sf W}_e$ is non-recursive, $({\sf W}_{f(e)}, {\sf W}_{g(e)})$ is ${\sf RI}$ by Theorem \ref{Shoenfield first}. By Proposition \ref{main key thm}, $T_{({\sf W}_{f(e)}, {\sf W}_{g(e)})}={\sf W}_{s(e)}$ is recursively inseparable and hence essentially undecidable. Thus, $s(e)\in V$.
Suppose $e\notin U$. Since ${\sf W}_e$ is recursive and $T_{({\sf W}_{f(e)}, {\sf W}_{g(e)})}={\sf W}_{s(e)}\equiv_{\sf T} {\sf W}_e$, we have $s(e)\notin V$.
Thus $e\in U\Leftrightarrow s(e)\in V$.
\end{proof}

Thus, $V$ is $\Pi^0_3$-complete.
\end{proof}

\begin{remark}\label{important coro}
Let $U=\{e: {\sf W}_e$ is undecidable\} and $V=\{e: {\sf W}_e\in {\sf D_I}$ and ${\sf W}_e$ is Turing persistent\}.  Let $s$ be the recursive function defined in Theorem \ref{G1 index}.
Note that $s(e)$ is the index of the theory $T_{(B,C)}$  constructed in Proposition \ref{main key thm} where $(B,C)$ is constructed from $A={\sf W}_e$ as in Theorem \ref{Shoenfield first}.
If $e\in U$, by Proposition \ref{main key thm}, $s(e)\in V$. If $e\notin U$, then ${\sf W}_{s(e)}$ is decidable. Thus, $e\in U\Leftrightarrow s(e)\in V$.
\end{remark}

\begin{fact}[\cite{Rogers87}]~\label{more sets}
\begin{enumerate}[(1)]
  \item $\{e: {\sf W}_e$ is simple\} is $\Pi^0_3$-complete (Exercise 14-31(i), p. 334, \cite{Rogers87}).
  \item $\{e: {\sf W}_e$ is co-infinite\} is $\Pi^0_3$-complete (Corollary XVI, p. 328, \cite{Rogers87}).
\end{enumerate}
\end{fact}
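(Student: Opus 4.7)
The plan is to compute the arithmetical complexity of both index sets by direct syntactic inspection, and then to obtain $\Pi^0_3$-hardness by reducing from known $\Pi^0_3$-complete index sets. For the upper bound in (2), note that $W_e$ is co-infinite iff $\forall n \exists x (x > n \wedge \forall s \, \neg T_1(e,x,s))$, which is syntactically $\Pi^0_3$. For the upper bound in (1), simpleness asserts co-infiniteness of $W_e$ together with the condition $\forall i \, [W_i \text{ is finite} \vee W_i \cap W_e \neq \emptyset]$. The matrix of this universal statement is $\Sigma^0_2$, being the disjunction of the $\Sigma^0_2$ statement ``$W_i$ is finite'' and the $\Sigma^0_1$ statement ``$W_i \cap W_e \neq \emptyset$'', so the condition is $\Pi^0_3$, and simpleness is the conjunction of two $\Pi^0_3$ properties, hence $\Pi^0_3$.

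For the hardness in (2), I would invoke the classical theorem of Rogers that the index set $\{e : W_e \text{ is cofinite}\}$ is $\Sigma^0_3$-complete. Since ``co-infinite'' is the exact set-theoretic negation of ``cofinite,'' the co-infiniteness index set is the literal complement, inside $\omega$, of the cofiniteness index set. Many-one reducibility is preserved under complementation (the same reduction function witnesses $A \leq_m B$ and $\overline{A} \leq_m \overline{B}$), so the $\Sigma^0_3$-completeness of the cofiniteness index set transfers immediately to $\Pi^0_3$-completeness of the co-infiniteness index set.

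For the hardness in (1), the plan is to reduce the $\Pi^0_3$-complete co-infiniteness set from (2) to $\{e : W_e \text{ is simple}\}$. Uniformly in an index $a$, I would construct an r.e.\ set $U_{f(a)}$ by running Post's simple-set construction coupled with a module that watches the approximation to $W_a$: when $\overline{W_{a,s}}$ continues to look ``large,'' elements are injected into $U_{f(a)}$ in a controlled manner to satisfy Post's requirements $R_i$ asserting $W_i \text{ infinite} \Rightarrow W_i \cap U_{f(a)} \neq \emptyset$, while preserving enough room in $\overline{U_{f(a)}}$ to keep it infinite; if $W_a$ turns out to be cofinite, the construction is arranged so that $U_{f(a)}$ is forced to be cofinite, or else an infinite r.e.\ subset is explicitly left inside $\overline{U_{f(a)}}$, in either case destroying simpleness. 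The main obstacle is the priority bookkeeping needed to guarantee both implications simultaneously, especially the preservation of simpleness on the ``good'' branch where $W_a$ is genuinely co-infinite; this is the essential technical content of the classical proof in Rogers's book, and the reason the result is cited rather than proved in the present paper.
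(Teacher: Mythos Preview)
The paper does not prove this statement at all: it is stated as a \emph{Fact} with a bare citation to Rogers, and is invoked only to draw a one-line corollary immediately afterwards. There is therefore no ``paper's own proof'' to compare against; you are supplying an argument where the author deliberately deferred to the literature.

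That said, your sketch is essentially the standard argument one finds in Rogers. The upper bounds are handled correctly by syntactic inspection, and the hardness for (2) via complementation from the $\Sigma^0_3$-completeness of $\mathrm{Cof}$ is the canonical route. For (1), your plan of reducing co-infiniteness to simpleness through a controlled Post construction is the right shape, though as you acknowledge the priority bookkeeping is where the real work lies and your description remains at the level of intent rather than verification. One minor point: the usual textbook reduction for $\Pi^0_3$-hardness of the simple index set goes through $\overline{\mathrm{Cof}}$ or through $\mathrm{Rec}$ rather than being built directly on top of part (2), but these are interchangeable since all are $\Pi^0_3$-complete. Nothing in your outline is wrong; it simply fills in what the paper chose to cite.
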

As a corollary of Fact \ref{more sets} and Theorem \ref{V complete}, there exists a recursive function $f$ such that ${\sf W}_e$ is simple iff ${\sf G1}$ holds for ${\sf W}_{f(e)}$; and there exists a recursive function $g$ such that ${\sf W}_e$ is co-infinite iff ${\sf G1}$ holds for ${\sf W}_{g(e)}$.

\begin{theorem}[Murwanashyaka,  Pakhomov and Visser, Theorem 1.1 in \cite{PV}]\label{PV thm}
There are no minimal essentially undecidable recursively enumerable theories w.r.t. interpretation.
\end{theorem}

As a corollary of Theorem \ref{PV thm}, for any $T\in {\sf D_I}$, there exists $S\in {\sf D_I}$ such that $S\lhdneq T$. Thus, there are no minimal elements of $\langle {\sf D_I}, \lhd\rangle$. 
This answers an question in \cite{Cheng20}.

\section{Some generalizations}

The paper \cite{PV} gives two proofs of Theorem \ref{PV thm}. The first proof employs a direct diagonalisation argument and the second proof uses some recursion theoretic result.
In this section, we generalize the two proofs of Theorem \ref{PV thm} in \cite{PV} as in Theorem \ref{general first thm} and Theorem \ref{general important  thm}.

Now we first generalize the first proof of Theorem \ref{PV thm} in \cite{PV}.
Recall the Janiczak theory ${\sf J}$  and that the sentence $\Phi_n$ says that there exists an equivalence class of size precisely
$n + 1$.

\begin{definition}[\cite{PV}]\label{}
Given $X\subseteq \mathbb{N}$, we say that $W$ is a ${\sf J},X$-theory when $W$ is axiomatised over {\sf J} by boolean combinations of the sentences $\Phi_n$ for $n \in X$.
\end{definition}

\begin{theorem}[Theorem 4.5 in \cite{PV}]\label{key thm of first proof}
Given any essentially undecidable  theory $U$, we can effectively find a recursive set $X$ (from an index of $U$) such that no consistent ${\sf J},X$-theory interprets $U$. As a corollary, for any essentially undecidable theory $U$, we can effectively find an essentially undecidable theory $W$ such that $U$ is not interpretable in $W$.
\end{theorem}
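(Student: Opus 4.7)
The plan is to prove the main statement by an effective diagonalization against all candidate translations of $L(U)$ into $L({\sf J})$, and then derive the corollary by combining the resulting recursive set $X$ with a Shoenfield-style essentially undecidable construction inside $X$. The main challenge is to defeat every possible interpretation $\tau$ of $U$ into a ${\sf J}, X$-theory while keeping $X$ recursive and leaving enough of $\omega$ in $X$ to support a subsequent Shoenfield construction.

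Fix an effective enumeration $\tau_0, \tau_1, \ldots$ of translations $L(U) \to L({\sf J})$. By Janiczak's Lemma \ref{Janiczak's Lemma}, one can recursively compute, from $i$ and an axiom $\psi$ of $U$, a Boolean combination $B_{i,\psi}$ of sentences $\Phi_n$ equivalent to $\psi^{\tau_i}$ over ${\sf J}$. Because the $\Phi_n$ are mutually independent over ${\sf J}$, a ${\sf J}, X$-theory $W$ proves $\psi^{\tau_i}$ precisely when the \emph{universal $X$-projection} of $B_{i,\psi}$ (obtained by conjoining $B_{i,\psi}$ over all truth assignments to $\Phi_n$ with $n \notin X$) is propositionally provable from the axioms of $W$. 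Consequently, $\tau_i$ interprets $U$ in some consistent ${\sf J}, X$-theory if and only if the family $\{B_{i,\psi} : \psi \in U\}$, after universal projection onto $\{\Phi_s : s \in X\}$, is propositionally consistent. I then build $X$ recursively in stages: at stage $s$, for each $i \leq s$, I search effectively (via the r.e.~enumeration of $U$) for a finite subfamily together with an index $m_i$ such that the universal projection of the corresponding $B_{i,\psi}$'s onto $\omega \setminus \{m_i\}$ is already propositionally inconsistent, and if found I exclude $m_i$ from $X$. The essential undecidability of $U$ is what must guarantee such a witness: if the projection were consistent for every $i$ on the chosen $X$, then the resulting ${\sf J}, X$-theory would be a consistent extension of $U^{\tau_i}$ with effectively grasped propositional content, which (combined with the decidability of the pure Janiczak theory, Theorem \ref{J thm}) should contradict the essential undecidability of $U$ via a Shoenfield-style reduction in the spirit of Theorem \ref{Shoenfield second}.

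For the corollary, arrange the enumeration so that $X$ remains infinite while still recursive. Apply Shoenfield's construction (Theorems \ref{Shoenfield first} and \ref{Shoenfield second}) relativized to indices in $X$: obtain a recursively inseparable r.e.~pair $(B, C)$ with $B, C \subseteq X$, and set $W = {\sf J} + \{\Phi_s : s \in B\} + \{\neg \Phi_s : s \in C\}$. Then $W$ is a consistent r.e.~${\sf J}, X$-theory, essentially undecidable by the usual Shoenfield argument, and by the main statement $U$ is not interpretable in $W$. The main obstacle is the central claim powering the diagonalization: that whenever $\tau_i$ would otherwise interpret $U$ in some consistent ${\sf J}, X$-theory, the essential undecidability of $U$ effectively produces an index $m_i$ witnessing inconsistency of the universal projection onto $\omega \setminus \{m_i\}$. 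A precise proof requires a careful analysis of how Janiczak's quantifier elimination interacts with the Boolean-combinatorial content of $U^{\tau_i}$, and likely a finite-injury or priority-style scheduling to manage witnesses across stages while preserving recursiveness and infinitude of $X$.
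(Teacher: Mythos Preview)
Your high-level plan is right---diagonalise against all translations using Janiczak's elimination, then run a Shoenfield construction inside $X$ for the corollary---but the central step is a genuine gap, and the specific mechanism you propose fails. You search, for each $\tau_i$, for a \emph{single} index $m_i$ such that universally projecting out $\Phi_{m_i}$ already makes a finite fragment of $U^{\tau_i}$ propositionally inconsistent. Such an $m_i$ need not exist. For a concrete obstruction, let $(A,B)$ be recursively inseparable and let $V$ assert, for each $n$, that the majority of $\Phi_{3n},\Phi_{3n+1},\Phi_{3n+2}$ is true when $n\in A$ and false when $n\in B$. Then ${\sf J}+V$ is essentially undecidable, yet for every single $m$ the projection $\forall\Phi_m V$ remains consistent: simply fix the other two variables in the relevant triple to the required value. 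Your heuristic that a consistent projection would yield a decidable extension via ``effectively grasped propositional content'' is also wrong: the projection of an r.e.\ theory is merely another r.e.\ theory, and there is no Shoenfield-style reduction producing decidability from it. So removing one index per translation cannot succeed, and no finite-injury scheduling rescues this.

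The paper's construction sidesteps the problem by building $X$ not via deletion of individual indices but as the range of a strictly increasing recursive function $F$, leaving long gaps between consecutive values. The point is that for each of the finitely many complete assignments $C_{F(n)+1,j}$ to $\Phi_0,\dots,\Phi_{F(n)}$, the theory ${\sf J}+C_{F(n)+1,j}$ is \emph{decidable} (Janiczak), so by essential undecidability of $U$ one effectively finds a theorem $\phi$ of $U$ with ${\sf J}+C_{F(n)+1,j}\nvdash\phi^{\tau_n}$; one then takes $F(n+1)$ beyond every index occurring in the Boolean normal forms of these finitely many witnesses. Since a ${\sf J},X$-theory constrains no $\Phi_s$ with $F(n)<s<F(n+1)$, any such theory consistent with $C_{F(n)+1,j}$ is also consistent with an extension of $C_{F(n)+1,j}$ over the gap that refutes $\phi^{\tau_n}$, defeating $\tau_n$. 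No priority argument is needed; the construction is a straight recursion. Your treatment of the corollary (place an effectively or recursively inseparable pair inside $X$ and form the corresponding Shoenfield-style ${\sf J},X$-theory) does match the paper.
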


From Theorem \ref{key thm of first proof}, for any essentially undecidable theory $U$, we can effectively find an essentially undecidable theory $V$ such that $V \lhdneq U$.\footnote{Take the essentially undecidable  theory $W$ from Theorem \ref{key thm of first proof} such that $U$ is not interpretable in $W$. Let $V=U\oplus W$. By Lemma \ref{key lemma}, $V$ is essentially undecidable. Since $U$ is not interpretable in $W$, we have $U$ is not interpretable in $V$. Thus, $V \lhdneq U$.}

\begin{lemma}\label{EI lemma}
Suppose $(Y,Z)$ is an effectively inseparable pair of RE sets.
Define the theory $V={\sf J} + \{\Phi_n: n \in Y\}+ \{\neg \Phi_n: n \in Z\}$.
Then $V$ is {\sf EI}.
\end{lemma}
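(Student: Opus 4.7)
The plan is to exhibit a recursive function witnessing that the nucleus pair $(V_P, V_R)$ is effectively inseparable, by pulling back the witness for $(Y,Z)$ along the obvious recursive reduction $h \colon n \mapsto \ulcorner \Phi_n \urcorner$ from $(Y,Z)$ to $(V_P, V_R)$.

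First I would verify the basic setup. Since $(Y,Z)$ is an ${\sf EI}$ pair, $Y$ and $Z$ are disjoint r.e.\ sets, and since the sentences $\Phi_n$ are mutually independent over ${\sf J}$ (Lemma~\ref{Janiczak's Lemma}), the theory $V$ is a consistent r.e.\ theory. Moreover $h$ reduces $(Y,Z)$ to $(V_P, V_R)$: if $n \in Y$ then $V \vdash \Phi_n$, so $h(n) \in V_P$; if $n \in Z$ then $V \vdash \neg \Phi_n$, so $h(n) \in V_R$.

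Next I would carry out the pullback. Suppose $i, j$ satisfy $V_P \subseteq W_i$, $V_R \subseteq W_j$, and $W_i \cap W_j = \emptyset$. By the s-m-n theorem there are recursive functions $\sigma, \tau$ such that $W_{\sigma(i)} = h^{-1}[W_i] = \{n : \ulcorner \Phi_n \urcorner \in W_i\}$ and $W_{\tau(j)} = h^{-1}[W_j]$. Using the reduction property, $Y \subseteq W_{\sigma(i)}$ and $Z \subseteq W_{\tau(j)}$, and disjointness is preserved since $W_i \cap W_j = \emptyset$ forces $W_{\sigma(i)} \cap W_{\tau(j)} = \emptyset$. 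Let $f$ be the recursive function witnessing that $(Y,Z)$ is ${\sf EI}$. Then
\[
f(\sigma(i), \tau(j)) \notin W_{\sigma(i)} \cup W_{\tau(j)},
\]
which by the definitions of $\sigma, \tau$ translates to $h(f(\sigma(i), \tau(j))) \notin W_i \cup W_j$. Thus
\[
g(i,j) \;:=\; h\bigl(f(\sigma(i), \tau(j))\bigr)
\]
is a recursive function witnessing that $(V_P, V_R)$ is ${\sf EI}$, completing the proof.

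This is essentially the standard closure of ${\sf EI}$ under recursive reductions (analogous to part (1) of Fact~\ref{EI fact} for the interpretation case), so I do not anticipate a serious obstacle; the only thing to be careful about is the application of the s-m-n theorem to obtain the indices $\sigma(i), \tau(j)$ uniformly from $i, j$, which is entirely routine.
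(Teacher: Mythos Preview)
Your proof is correct and follows essentially the same approach as the paper: both pull back the ${\sf EI}$ witness for $(Y,Z)$ along the recursive map $n \mapsto \ulcorner \Phi_n \urcorner$, using the s-m-n theorem to obtain r.e.\ indices for the preimages, and then compose to get the witness for $(V_P,V_R)$. The only cosmetic difference is that you introduce two index functions $\sigma,\tau$ where the paper uses a single $g$ (since both preimages are taken along the same map), and your variable names are permuted relative to the paper's.
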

\begin{proof}\label{}

We want to find a recursive function $h(i,j)$ such that if $V_P\subseteq {\sf W}_i, V_R\subseteq {\sf W}_j$ and ${\sf W}_i\cap {\sf W}_j=\emptyset$, then $h(i,j)\notin {\sf W}_i\cup {\sf W}_j$.

Define the function $f: n\mapsto \Phi_n$. Clearly, $f$ is recursive. By s-m-n theorem, there is a recursive function $g$ such that $f^{-1}[{\sf W}_i]={\sf W}_{g(i)}$. Suppose $(Y,Z)$ is effectively  inseparable via the recursive function $t(i,j)$. Define $h(i,j)=f(t(g(i), g(j)))$. Clearly, $h$ is recursive.

Suppose
$V_P\subseteq {\sf W}_i, V_R\subseteq {\sf W}_j$ and ${\sf W}_i\cap {\sf W}_j=\emptyset$.
Note that $Y\subseteq f^{-1}[V_P]\subseteq f^{-1}[{\sf W}_i]={\sf W}_{g(i)}$, and $Z\subseteq f^{-1}[V_R]\subseteq f^{-1}[{\sf W}_j]={\sf W}_{g(j)}$.
Note that ${\sf W}_{g(i)}\cap {\sf W}_{g(j)}=\emptyset$ since ${\sf W}_i\cap {\sf W}_j=\emptyset$.
Since $(Y,Z)$ is {\sf EI} via the function $t$, we have $t(g(i), g(j))\notin {\sf W}_{g(i)}\cup {\sf W}_{g(j)}$. Then, $h(i,j)\notin {\sf W}_i\cup {\sf W}_j$. Thus, $V$ is {\sf EI}.
\end{proof}

\begin{theorem}\label{general first thm}
Let $P$ be some property of RE theories, and $S, T$ be any consistent RE theories. Suppose the following conditions hold:
\begin{enumerate}[(1)]
  \item the operator $\oplus$ is closed under the property $P$: if $S$ and $T$ have the property $P$, then $S\oplus T$ also has the property $P$;
  \item if $S$ has the property $P$, then $S$ is essentially undecidable;
  \item if $S$ is {\sf EI}, then $S$ has the property $P$.
\end{enumerate}
Then for any RE theory $U$ with the property $P$, we can effectively find a  RE theory $T$ with the property $P$ such that $T\lhdneq U$. As a corollary, there are no minimal RE theories with the property $P$ w.r.t. interpretation. 
\end{theorem}
\begin{proof}\label{}
Let $U$ be a consistent RE theory with the property $P$. By Condition (2), Theorem \ref{key thm of first proof} applies to RE theories with the property $P$. Thus, we can effectively find a recursive set $X$ (from an index of $U$) such that no consistent
${\sf J},X$-theory interprets $U$. Let $Y$ and $Z$ be effectively inseparable RE sets which are subsets of $X$.
Define the theory $V$ as $V={\sf J} + \{\Phi_n: n \in Y\}+ \{\neg \Phi_n: n \in Z\}$.
By Lemma \ref{EI lemma}, $V$ is {\sf EI}. By Condition (3), $V$ has the property $P$. Let $T=U\oplus V$. By condition (1), $T$ has the property $P$. Since no consistent
${\sf J},X$-theory interprets $U$, $U$ is not interpretable in $V$.
Thus, $T\lhdneq U$ since $U$ is not interpretable in $T$.
\end{proof}

Now we generalize the second proof of Theorem \ref{PV thm} in \cite{PV}.
We say $\sqsubseteq$ is a \emph{partial pre-ordering} on RE theories if $\sqsubseteq$ is reflexive and transitive: i.e., for any RE theory $S, T$ and $U$, we have $S\sqsubseteq S$, and if $S\sqsubseteq T$ and $T\sqsubseteq U$, then $S\sqsubseteq U$. Given any partial pre-ordering  $\sqsubseteq$ on RE theories, we define $S\equiv T \triangleq S\sqsubseteq T$ and  $T\sqsubseteq S$. From this equivalence relation, we can introduce a degree structure generated from this equivalence relation. Let $T\sqsubset S$ denote that  $T\sqsubseteq S$ but $S\sqsubseteq T$ does not hold. We say  $S$ is a \emph{minimal RE theory w.r.t. the relation $\sqsubseteq$} if there is no RE theory $T$ such that $T\sqsubset S$.

Let $P$ be any property of RE theories (e.g., {\sf EU}, {\sf EI}, etc). Let ${\sf D}_P$ be the collection of RE theories with the property $P$.
Theorem \ref{general important  thm} provides us with a general way to show that $\langle {\sf D}_P, \sqsubseteq\rangle$ has no minimal elements w.r.t. the degree structure induced from the relation $\sqsubseteq$.

\begin{theorem}\label{general important  thm}
Given a property $P$ of RE theories, suppose $\sqsubseteq$ is a partial pre-ordering on RE theories satisfying the following conditions for some $n\in\omega$:
\begin{enumerate}[(1)]
  \item For any $A, B\in {\sf D}_P$, there is a  lower bound $A\boxplus B$ of $A$ and $B$ under $\sqsubseteq$ such that $A\boxplus B\in {\sf D}_P$;
  \item If $A\in {\sf D}_P$ and $A\sqsubseteq B$, then  $B\in {\sf D}_P$;
  \item The relation $A\sqsubseteq B$ is $\Sigma^0_n$;
  \item  $V=\{e: {\sf W}_e$ has the property $P$\} is $\Pi^0_n$-complete.\footnote{Here, ``${\sf W}_e$ has the property $P$" means that the RE theory with index $e$ has the property $P$.}
\end{enumerate}
Then $\langle {\sf D}_P, \sqsubseteq\rangle$ has no minimal element.
\end{theorem}
\begin{proof}\label{}
Suppose $\langle {\sf D}_P, \sqsubseteq\rangle$ has a minimal element $T$. We show that for any RE theory $A\in {\sf D}_P$, we have $T\sqsubseteq A$.
Note that $T\boxplus A\in {\sf D}_P$. Since $T$ is a minimal element of ${\sf D}_P$, and $T\boxplus A\in {\sf D}_P$ is a lower bound of $T$ and $A$ by condition (1), we have $T\sqsubseteq T\boxplus A$. Since $T\boxplus A\sqsubseteq A$, we have $T\sqsubseteq A$.

Note that by condition (2), $e \in V \Leftrightarrow T\sqsubseteq {\sf W}_e$.
By condition (3), $V$ is $\Sigma^0_n$ which contradicts condition (4).
\end{proof}

\begin{theorem}\label{general third thm}
Let $P$ be some property of RE theories, and $S, T$ be any consistent RE theories. Suppose the following conditions hold:
\begin{enumerate}[(1)]
  \item the operator $\oplus$ is closed under the property $P$: if $S$ and $T$ have the property $P$, then $S\oplus T$ also has the property $P$;
  \item For any essentially undecidable RE theory $U$, we can effectively
find (an index of) a RE theory $V$ with the property $P$ such that
$U\ntriangleleft V$.
\item If a theory has the property $P$, then it is essentially undecidable.
\end{enumerate}
Then for any RE theory $U$ with the property $P$, we can effectively find a  RE theory $T$ with the property $P$ such that $T\lhdneq U$. 
\end{theorem}
\begin{proof}\label{}
Let $U$ be a RE theory with the property $P$. By Condition (3), $U$ is essentially undecidable. By Condition (2), we can effectively
find (an index of) a RE theory $V$ with the property $P$ such that
$U\ntriangleleft V$. Let $T=U \oplus V$. By Condition (1), $T$ has the property $P$. Since $U\ntriangleleft V$, we have $T\lhdneq U$. 
\end{proof}

Note that Theorem \ref{general first thm}  and Theorem \ref{general third thm} tell us more information than Theorem \ref{general important  thm}. The proof of Theorem \ref{general important  thm} is not effective: it shows that  there are no minimal  theories with the property $P$ w.r.t. interpretation, but it does not tell us that given a theory with the property $P$, we can effectively find a weaker theory with the property $P$ w.r.t. interpretation. But Theorem \ref{general first thm} and Theorem \ref{general third thm} also tell us that we can effectively find a weaker theory with the property $P$ w.r.t. interpretation, given a theory with the property $P$. 

We can view the second proof of Theorem \ref{PV thm} in \cite{PV} as a special case of Theorem \ref{general important  thm}  in which $n=3, P$ is the property of essential undecidability, $\sqsubseteq$ is the interpretation relation, and $A\boxplus B$ is $A\oplus B$. 
We can view the first proof of Theorem \ref{PV thm} in \cite{PV} as a special case of Theorem \ref{general first thm} and Theorem \ref{general third thm} where $P$ is the property of essential undecidability.

\begin{theorem}[Theorem 3.15, \cite{Cheng EI}]~\label{}
There are no minimal effectively inseparable theories with respect to interpretability: for any effectively inseparable theory $T$, we can effectively find a theory which is effectively inseparable and strictly weaker than $T$ w.r.t. interpretation.
\end{theorem}
We can view the proof of Theorem 3.15 in \cite{Cheng EI} as a special case of Theorem \ref{general first thm} and Theorem \ref{general third thm} where $P$ is the property of effective inseparability.

Before we discuss the application of Theorem \ref{general first thm}, Theorem \ref{general important  thm} and Theorem \ref{general third thm} to essentially hereditarily undecidable theories, we first introduce the notion of essentially hereditarily undecidable theories. 
  
\begin{definition}[Essentially hereditarily undecidable theories, \cite{undecidable}]\label{def of 1.2}
Let $T$ be a consistent RE theory.
\begin{enumerate}[(1)]
     \item We say $T$ is \emph{hereditarily undecidable} ({\sf HU}) if every sub-theory $S$ of $T$ over the same language  is undecidable.
       \item We say $T$ is \emph{essentially hereditarily undecidable} ({\sf EHU}) if any consistent RE extension of $T$ is {\sf HU}.
\end{enumerate}
\end{definition}

Visser gave two proofs that there is no interpretability minimal essentially hereditarily undecidable
RE theory: Theorem 41  and Theorem 44 in \cite{Visser24}. 

\begin{theorem}[\cite{Visser24}]~\label{Visser ger}
\begin{enumerate}[(1)]
  \item For any essentially undecidable RE theory $U$, we can effectively
find (an index of) an essentially hereditarily undecidable RE theory $V$ such that
$U\ntriangleleft V$ (Theorem 44 in \cite{Visser24}). 
  \item There is no interpretability minimal essentially hereditarily undecidable
RE theory (Theorem 41 in \cite{Visser24}).
\end{enumerate}
\end{theorem}

Visser proved in \cite{Visser24} that essentially hereditarily undecidable theories have the following properties:
\begin{enumerate}[(1)]
  \item  If $S$ and $T$ are essentially hereditarily undecidable theories, then $S\oplus T$ is also  essentially hereditarily undecidable (Theorem 15, \cite{Visser24}); 
\item Suppose $U$ is consistent and essentially hereditarily undecidable and
$U\lhd V$. Then $V$ is essentially hereditarily undecidable (Theorem 16, \cite{Visser24}).
\end{enumerate}

It is easy to check that if $T$ is essentially hereditarily undecidable, then $T$ is essentially undecidable. From the argument of Theorem 41 in \cite{Visser24}, we can show that $\{e: {\sf W}_e$ is essentially hereditarily undecidable\} is $\Pi^0_3$-complete.
From these properties, it is easy to see that Theorem \ref{general important  thm}  and Theorem \ref{general third thm} apply to 
essentially hereditarily undecidable theories. 
We can view the proof of Theorem 44 in \cite{Visser24} as a special case of Theorem \ref{general third thm} where $P$ is the property of essential hereditary undecidability.  We can view the proof of Theorem 41 in \cite{Visser24} as a special case of Theorem \ref{general important  thm} in which $n=3, P$ is the property of essential hereditary undecidability, $\sqsubseteq$ is the interpretation relation, and $A\boxplus B$ is $A\oplus B$. However, Condition (3) of Theorem \ref{general first thm} does not apply to  essentially hereditarily undecidable theories since effective inseparability does not imply essential hereditary undecidability
(see Theorem 5.4 in \cite{Cheng 24}). 

We conclude this section with some comments about {\sf Creative} theories and Rosser theories. 
We first give some definitions. Let $T$ be a consistent RE theory in the language of arithmetic or in a language admitting numerals, and $(A,B)$ be a disjoint pair of RE sets.
We say $T$ is {\sf Creative} if $T_P$ is creative.
We say $(A,B)$ is \emph{separable} in $T$ if there is a formula $\phi(x)$ with only one free variable such that if $n\in A$, then $T\vdash \phi(\overline{n})$, and if $n\in B$, then $T\vdash \neg\phi(\overline{n})$. We say $T$ is  \emph{Rosser}  if any disjoint  pair of RE sets is separable in $T$.

Note that $\{e: {\sf W}_e$ is creative\} is $\Sigma^0_3$-complete (Exercise 14-31(iii), p. 334 in \cite{Rogers87}). Thus, Condition (4) in Theorem \ref{general important  thm} does not apply to {\sf Creative} theories.  Being {\sf Creative}  does not imply being essentially undecidable (see Theorem 4.12 in \cite{Cheng 24}).\footnote{For example, let  $X$ be a creative set.  Define $T:={\sf J}+ \{\Phi_n: n\in X\}$. It is easy to check that $T$ is {\sf Creative}, but ${\sf J}+ \{\Phi_n \mid n\in \omega\}$ is a consistent complete decidable RE extension of $T$ (Theorem 4.12 in \cite{Cheng 24}).} Thus, Condition (2) in Theorem \ref{general first thm}  and Condition (3) in Theorem \ref{general third thm} does not apply to {\sf Creative} theories. 

For Rosser theories, we can show that if $S$ and $T$ are Rosser theories, then $S\oplus T$ is also Rosser as follows. Let $(A,B)$ be a disjoint pair of RE sets. Since $S$ is Rosser, there exists a formula $\phi(x)$ such that $n\in A\Rightarrow S\vdash \phi(\overline{n})$ and $n\in B\Rightarrow S\vdash \neg\phi(\overline{n})$. Since $T$ is Rosser, there exists a formula $\psi(x)$ such that $n\in A\Rightarrow T\vdash \psi(\overline{n})$ and $n\in B\Rightarrow T\vdash \neg\psi(\overline{n})$. Define $\varphi(x):= (P\rightarrow \phi(x))\wedge (\neg P\rightarrow \psi(x))$. We can check that $n\in A\Rightarrow S\oplus T\vdash \varphi(\overline{n})$ and $n\in B\Rightarrow S\oplus T\vdash \neg\varphi(\overline{n})$.
However, we do not know whether for any essentially undecidable RE theory $U$, we can effectively
find (an index of) a Rosser theory $V$  such that
$U\ntriangleleft V$. We did not explore this in this work. 

\section{Conclusion}

We give some concluding remarks.
The existence of a non-recursive RE set is essential to understand the incompleteness phenomenon. Given any  non-recursive RE set $A$, we can effectively construct a RE theory with the same Turing degree as $A$ for which $\sf G1$ holds. In \cite{Friedman21}, Harvey Friedman proves $\sf G2$ for theories interpreting $\sf I\Sigma_1$ based on the existence of a remarkable set which is equivalent to the  existence of a non-recursive RE set.  Whether there are  minimal RE theories for which $\sf G1$ holds depends on the definition of minimality. The fact that there are no minimal RE theories  for which $\sf G1$ holds w.r.t. Turing reducibility and interpretation shows that the incompleteness phenomenon is omnipresent, and there is no limit of $\sf G1$ w.r.t. Turing reducibility and interpretation.

Both \cite{Cheng20} and this paper are about the limit of  $\sf G1$. A natural question is: what is the limit of the second incompleteness theorem ($\sf G2$) if any?
Both mathematically and philosophically, $\sf G2$ is  more problematic than $\sf G1$. In the case of $\sf G1$, we are mainly interested in the fact that \emph{some} sentence is independent of the base theory. But in the case of $\sf G2$, we are also interested in the content of the consistency statement.
We can say that ${\sf G1}$ is extensional in the sense that we can construct a concrete mathematical statement which is independent of the base theory without referring to  arithmetization or  provability predicate. However, ${\sf G2}$ is intensional and ``whether the consistency of a theory $T$ is provable in $T$" depends on many factors such as the way of formalization, the base theory  we use,  the way of coding, the way to express consistency, the provability predicate we use, etc. For the discussion of the intensionality of ${\sf G2}$, we refer to \cite{Cheng 19}.

Let $T$ be a consistent RE theory  in the language of arithmetic or in a language admitting numerals via interpretation. We say $\sf G2$ holds for $T$ if $T\nvdash \mathbf{Con}(T)$.\footnote{We assume that $\mathbf{Con}(T)$ is the canonical arithmetic formula expressing the consistency of the base theory $T$ defined as $\neg \mathbf{Pr}_T(\mathbf{0}\neq \mathbf{0})$ saying that $\mathbf{0}\neq \mathbf{0}$ is not provable in $T$ where the coding we use is the standard G\"{o}del coding and the provability predicate we use satisfies Hilbert-Bernays-L\"{o}b derivability conditions.} Pudl\'{a}k shows that no RE theory $T$ (in predicate logic) interprets $\mathbf{Q}+\mathbf{Con}(T)$ (see \cite{Pudlak 85}). As a corollary, $\sf G2$ holds for $\mathbf{Q}$. Authors in \cite{Bezboruah 76} proved that $\sf G2$ holds for $\mathbf{Q}$ using a totally different method. 

\begin{theorem}[Theorem 1, \cite{Bezboruah 76}]\label{theory for Q}
Let $L$ be any formal system with a recursive set of axioms, a finite number of finitary and recursive rules of inference including modus ponens and having $A\rightarrow A$ as a theorem for all sentences $A$. Then $Con(L)$ is not provable in $T_0$, where $T_0$ is a weak arithmetic theory with the property that $\mathbf{Q}\subseteq T_0\subseteq \mathbf{PA}$.\footnote{For the definition of $T_0$, we refer to \cite{Bezboruah 76}.} 
\end{theorem}

I list  the question whether $\sf G2$ holds for $\mathbf{R}$ (i.e.~ whether $\mathbf{R}\nvdash {\sf Con(\mathbf{R})}$ holds) as an open question in \cite{Cheng 19}. In fact, as a corollary of Theorem \ref{theory for Q}, since $\mathbf{R}\subseteq \sf PA$, we have $\sf G2$ holds for the theory $\mathbf{R}$.

We know that $\sf G1$ holds for many RE theories $T$ such that $T$ has Turing degree less than $\mathbf{0}^{\prime}$ and $T \lhdneq \mathbf{R}$. 
However, we do not know whether there exists a RE theory $T$ such that $\sf G2$ holds for $T$, $T$ has Turing degree less than $\mathbf{0}^{\prime}$ and $T \lhdneq \mathbf{R}$. We did not explore this question.

We conclude this paper with some informal comments. We know that given an essentially incomplete (effectively inseparable, essentially hereditarily undecidable) RE theory, we can effectively find another strictly weaker essentially incomplete (effectively inseparable, essentially hereditarily undecidable) theory w.r.t. interpretation. 
These facts reveal that the first incompleteness theorem is limitless, and provide us with more evidences from logic that the incompleteness phenomenon is ubiquitous (pervasive) in  abstract formal theories. From the viewpoint of logic, in matters of incompleteness everything is always as bad as it can get via the magic tools from logic, and these negative results are just weird facts about an already strange phenomenon. 
However, if we only consider those ``natural" mathematical theories, my personal view is that the theory $\mathbf{R}$ is the weakest ``natural" mathematical theory which is essentially incomplete even if we do not have a formal definition of naturalness in the literature.

\section{Appendix}

In this Appendix, we give a brief overview of interpretation degree structures of three classes of theories  in the literature: general RE theories, RE theories extending $\mathbf{PA}$ and finitely axiomatized theories. In this Appendix, we use the most inclusive notion of interpretation as explained in the introduction section.

\begin{definition}[Distributive lattice, \cite{Per 97}]~\label{}
\begin{enumerate}[(1)]
  \item A lattice is an algebraic structure $(L,\vee ,\wedge)$, consisting of a set $L$ and two binary, commutative and associative operations $\vee$  and $\wedge$ on $L$ satisfying the following axiomatic identities for all elements $a,b\in L$ (sometimes called absorption laws):
\begin{enumerate}[(A)]
  \item $a\vee (a\wedge b)=a$;
  \item $a\wedge (a\vee b)=a$.
\end{enumerate}
  \item
A lattice is distributive if it satisfies $a\vee (b\wedge c)=(a\vee b)\wedge (a\vee c)$ for all $a,b,c\in L$.
\end{enumerate}
\end{definition}

We first examine the interpretation degree structure of general RE theories.
The interpretation degree structure of RE theories extending $\mathbf{PA}$ is well known in the literature. We have some equivalent characterizations of the interpretation relation between consistent RE extensions of $\mathbf{PA}$. Given a theory $T$, define $T \upharpoonright k = \{n\in T: n \leq k\}$.

\begin{fact}[Orey-H\'{a}jek Characterization, \cite{Per 97}]
Suppose theories $S$ and $T$ are consistent RE extensions of $\mathbf{PA}$. Then the following are equivalent:
\begin{enumerate}[(1)]
  \item $S\lhd T$;
  \item $T\vdash \mathbf{Con}(S\upharpoonright k)$ for any $k\in\omega$;
  \item $(S\upharpoonright k)\lhd T$ for any $k\in\omega$;
  \item For any $\Pi^{0}_1$ sentence $\phi$, if $S\vdash \phi$, then $T\vdash \phi$.
  \item For any model $M$ of $T$, there is a model $N$ of $S$ such that $M$ is isomorphic to an initial segment of $N$.
\end{enumerate}
\end{fact}

Let $\langle {\sf D_\mathbf{PA}}, \lhd\rangle$ denote the interpretation degree structure of consistent RE extensions of $\mathbf{PA}$. 
In fact, $\langle {\sf D_\mathbf{PA}}, \lhd, \downarrow, \uparrow\rangle$ is a dense distributive lattice\footnote{We say that $\langle {\sf D_\mathbf{PA}}, \lhd, \downarrow, \uparrow\rangle$ is dense if for any $A,B\in {\sf D_\mathbf{PA}}$ such that $A\lhdneq B$, there exists $C\in {\sf D_\mathbf{PA}}$ such that $A\lhdneq C\lhdneq B$.}, where 
\[A\downarrow B=T+\{\mathbf{Con}(A\upharpoonright k)\vee \mathbf{Con}(B\upharpoonright k): k\in\omega\}\]
 and
\[A\uparrow B= T+\{\mathbf{Con}(A\upharpoonright k)\wedge \mathbf{Con}(B\upharpoonright k): k\in\omega\}.\]
For more properties of $\langle {\sf D_\mathbf{PA}}, \lhd\rangle$, we refer to \cite{Per 97}.

The interpretation degree structure of finitely axiomatized theories, denoted by $\langle {\sf D_{fin}}, \lhd\rangle$, is studied by
Harvey Friedman in \cite{Friedman07}. Note that there are only  $\aleph_0$ many interpretation degrees of finitely axiomatized theories. The structure $\langle {\sf D_{fin}}, \lhd\rangle$ forms a (reflexive)
partial ordering with a minimum element $\top$ and a maximum element $\bot$, where  $\top$ is the equivalence
class of all sentences with a finite model, and $\bot$ is the equivalence class of all sentences
with no models.

\begin{theorem}[\cite{Friedman07}]~
\begin{enumerate}[(1)]
  \item The structure $\langle {\sf D_{fin}}, \lhd, \oplus, \otimes\rangle$ is a distributive lattice.
   \item The structure $\langle {\sf D_{fin}}, \lhd\rangle$ is dense, i.e., $a \lhdneq b \rightarrow (\exists c)(a \lhdneq c
\lhdneq b)$.
  \item For any  $a, b\in {\sf D_{fin}}$, if $a \lhdneq b$, then there
exists an infinite sequence $c_n$  such that $a\lhdneq c_n\lhdneq b$ for each $n$ and $c_n$'s are incomparable w.r.t. interpretation.
\end{enumerate}
\end{theorem}

Thus, $\langle {\sf D_{fin}}, \lhd, \oplus, \otimes\rangle$  is a dense distributive lattice.
The structure $\langle {\sf D_{fin}}, \lhd\rangle$ bears a rough resemblance to the Turing degree structure of RE sets.

\end{document}